\def\H{H}
\def\A{\mathcal{A}}
\def\X{X}
\def\Y{Y}
\def\tto{\rightrightarrows}
\def\B{\mathbb{B}}
\def\R{\mathbb{R}}
\def\N{\mathbb{N}}
\def\co{\operatorname{co}}
\def\gph{\operatorname{gph}}
\def\epi{\operatorname{epi}}
\def\dom{\operatorname{dom}}
\def\cl{\operatorname{cl}}
\def\bd{\operatorname{bd}}
\def\N{\mathbb{N}}
\def\dmu{\mu(d\omega)}
\newcommand{\MoreauYosida}[2]{\operatorname{e}_{#2} #1}
\newcommand{\IntfLp}[1]{{\mathcal{I}}_{#1}}
\DeclareMathOperator{\inte}{\operatorname{int} }
\newcommand{\Rex}{\overline{\mathbb{R}}}
\let\epsilon\varepsilon
\title{Random multifunctions as the set minimizers of infinitely many differentiable random functions
	\thanks{This work was partially supported by ANID-Chile under  grants Fondecyt Regular 1190110, Fondecyt Regular 1200283 and Fondecyt de Iniciaci\'on 11180098.}}
\begin{document}

\titlerunning{Random multifunctions as minimizers  differentiable random functions}
\author{Juan Guillermo Garrido \and  Pedro P\'erez-Aros \and Emilio Vilches}

\institute{Juan Guillermo Garrido  \at Departamento de Ingenier\'ia Matem\'atica,  Universidad de Chile, Santiago, Chile\\
\email{ jgarrido@dim.uchile.cl } \and 
Pedro P\'erez-Aros \and Emilio Vilches \at Instituto de Ciencias de la Ingenier\'ia, Universidad de O'Higgins, Rancagua, Chile\\
\email{pedro.perez@uoh.cl,\,emilio.vilches@uoh.cl}} 


\maketitle

\begin{abstract}
Under mild assumptions, we prove that any random multifunction can be represented as the set of minimizers of an infinitely many differentiable normal integrand, which preserves the convexity of the random multifunction.  
We provide several applications of this result to the approximation of random multifunctions and integrands. The paper ends with a characterization of the set of  integrable selections of a measurable multifunction as the set of minimizers of an infinitely many differentiable integral function.

\keywords{Random multifunction  \and Measurable multifunctions   \and Integral functional \and Set-valued mappings }

\subclass{Primary:  49J53, 47H04,  26E25       \and Secondary: 28B20, 28B05}
\end{abstract}

\section{Introduction}

Variational Analysis offers a rich theory to study generalized differentiation of mappings (functions and multifunctions).  The classical way in which the theory is built is to begin with the study of properties of tangents and normal of sets/set-valued maps and then turn to the study of functions through the epigraph/graph of the functions/multifunctions. In this approach, the normal cone plays a prominent role since it allows defining the notion of subdifferential when applied to the epigraph of a given function (see, e.g., \cite{MR1491362} for more details).  Thus, it is of importance to have rules for the calculus of normal cones. Most formulas for such normal cones depends on having an appropriate representation of the set/set-valued map. For instance,  when the set is the sublevel of some smooth function, the normal cone is obtained, under some assumptions, as the cone generated by the gradients of the function at a given point.  A similar result holds for sets defined a the set of minimizers of a smooth function.  Therefore, it is quite desirable to have a representation of a set/set-valued map as the set of minimizers of a smooth function. Unfortunately, it is well-known that not all sets in an arbitrary Banach space can have this representation; consider, e.g., the Banach space of continuous functions over an uncountable set, then a singleton in this space cannot be represented as the set of minimizers of a twice differentiable  function (see, e.g., \cite{MR1622271}).  However, it was proved in  \cite[Theorem~1]{MR1920049} that any closed convex subset of a separable Banach space could be represented as the set of minimizers of an infinitely many differentiable convex function.

In this paper, motivated by the work of Azagra and Ferrera \cite{MR1920049}, we prove, under mild assumptions, that any  random multifunction can be represented as the set of minimizers of an infinitely many differentiable normal integrand. Moreover, this normal integrand can be  constructed in such a way as to preserve the convexity of the random multifunction (Theorem \ref{MainTheorem}). 
We provide several applications of this result to the approximation of random multifunctions and integrands. Moreover, we obtain a characterization of the set integrable selections of a measurable multifunction as the set of minimizers of an infinitely many differentiable integral function and we give some applications to optimization theory.

The paper is organized as follows. After some preliminaries, in Section \ref{MainSECT}, we establish the main result of the paper (Theorem \ref{MainTheorem}), that is, the characterization of random multifunctions  as the set of minimizers of a smooth normal integrand. This result is used to provide a generalization of \cite[Theorem 1]{MR1920049}, that is, a characterization of measurable multifunctions with closed values, as the set of minimizers of an infinitely many differentiable convex normal integrand (see Corollary \ref{mainconvex}). Then,  in Section \ref{Approxima}, we show the existence of smooth approximations of random multifunctions and normal integrands. Section \ref{integrableselect}, provides a characterization of the set of integrable selections of measurable multifunction as the set of minimizers of an infinitely many smooth integral functional. Section \ref{SectionExamples} illustrates how our results can be used to regularize challenging optimization problems. The paper ends with some conclusions and concluding remarks.

\section{Mathematical preliminaries}

In the following $(\X,\Vert \cdot \Vert)$ and $(\H,\langle \cdot , \cdot \rangle)$ will denote a separable Banach space and a separable Hilbert space, respectively. For a set $A$ we denote by $\inte A$, $\cl A$ and $\bd{A}$, the interior, closure and the boundary of $A$, respectively.

Given a nonempty set $S$ of $X$, the support function  and the distance function are defined, respectively, as 
\begin{equation*}
    \sigma_S(x^{\ast}):=\sup_{x\in S}\langle x^{\ast},x\rangle, \quad d(x,S):=\inf_{y\in S} \Vert x-y\Vert,
\end{equation*}
when $S$ is a subset of $X^\ast$, the support function   is defined similarly.

\noindent Given a set $Y$, the graph of a set-valued map $M\colon \X \tto \Y$ is the set
$$
\gph{M}:=\{ (x,y)\in \X\times \Y \colon y\in M(x)\}.
$$
Moreover, for a set $O\subset \Y$, we denote
\begin{equation*}
    \begin{aligned}
    M^-(O):= \{ x\in \X : M(x)\cap O\neq \emptyset \}, \quad  M^+(O):= \{ x\in \X: M(x)\subset O\}.
    \end{aligned}
\end{equation*}
When $\X$ and $\Y$ are two topological spaces, we say that $M$ is \emph{usc} (resp. \emph{lsc}) if $M^{+}(O)$ (resp. $M^{-}(O)$) is open for every open set $O\subset \Y$. If $\Y$ is a Banach space endowed with the weak topology, then we say that $M$ is $\Vert \cdot \Vert$-weak usc (resp. lsc).

Throughout the article  $(\Omega,\mathcal{A},\mu)$ will be a complete $\sigma$-finite measure space. The Banach space of $p$-integrable functions $x:\Omega \to X$ is denoted by $L^p(\Omega, X)$. We say that a set-valued map $M\colon \Omega \tto X$ is \emph{measurable} if $M^{-}(O)\in \mathcal{A}$ for every open set $O\subset X$. Besides, $M$ is \emph{graph-measurable} if its graph  belongs to $\mathcal{A}\otimes\mathcal{B}(X)$, where  $\mathcal{B}(X)$ is the Borel $\sigma$-algebra on $X$.  

Let $X$ be a complete separable metric space, for a multifunction $M\colon \Omega \tto X$ with nonempty and closed values the following assertions are equivalent (see, e.g., \cite{MR0467310,MR2527754,MR2458436}):
\begin{enumerate}[label=\roman*)]
   \item  $M$ is measurable.
    \item  For all $x\in X$, the map $\omega\mapsto d(x,M(\omega))$ is measurable.  
    \item  There exists a sequence of measurable selections $(f_k)_{k}$ of $M$ such that $\{f_k(\omega)\}_k$ is dense in $M(\omega)$ for all $\omega\in \Omega$.
    \item   $M$ is graph measurable, provided that the space $(\Omega, \mathcal{A}, \mu)$ is complete. 
\end{enumerate}
 
A function $f:\Omega\times X\to \overline{\R}$ is called a \emph{normal integrand} if the set-valued mapping $\omega  \tto \epi f (\omega,\cdot) :=\{ (x,\alpha) \in X\times\R \colon  f(\omega, x) \leq \alpha\}$ is measurable with closed values, which is equivalent (under the completeness of $(\Omega, \mathcal{A},\mu)$) to have that for every fixed $\omega\in\Omega$, the map $x\mapsto f(\omega,x)$ is lower semi-continuous and $f$ is $\mathcal{A}\otimes\mathcal{B}(X)$-measurable. 

Given a normal integrand $f:\Omega \times X\to [0,+\infty)$,  an  {integral functional} defined on  $L^p(\Omega,X)$  is  the function $\IntfLp{f}^{\mu,p}: L^p(\Omega,X) \to  [0,+\infty]$, defined by 
\begin{equation*}
  \IntfLp{f}^{\mu,p}(x):=\int_\Omega f(\omega,x(\omega)) d\mu(\omega).
     \end{equation*}
  For simplicity of the notation, we write $\IntfLp{f}$ when there is no ambiguity on the measure space.

 \noindent A topological space $S$ is called \emph{Suslin space} if there exist a Polish space $Y$ (complete and separable metric space) and a continuous function $f:Y\to S$ such that $f(Y) = S$. It is well known that a Polish space is Suslin, and any Suslin space is separable. Moreover, for  a separable Banach space $X$, the dual space  $X^{\ast}$ endowed with the weak$^{\ast}$ topology is  Suslin.

 \noindent The following result is known as the Yankov-von Neumann-Aumann selection theorem (see, e.g., \cite[Theorem III.22]{MR0467310} and \cite[Remark 6.3.21 p.482]{MR2527754}).
\begin{proposition}[Graph measurable selection theorem]\label{graph:measurable:selection:theorem}
	Let $(\Omega, \mathcal{A},\mu)$ be  a  complete $\sigma$-finite measure space and $S$ a Suslin space. Let $M :\Omega \tto  S$ be a graph measurable multifunction with nonempty values. Then, there exists a sequence of measurable selections $m_k : \Omega \to S$ such that 
	\begin{equation*}
	    \{ m_k(\omega)\}_{k} \textrm{ is dense in  } M(\omega) \textrm{ for all } w\in\Omega.
	\end{equation*}
\end{proposition}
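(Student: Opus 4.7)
The plan is to reduce the problem to a Polish target via the Suslin structure, and then carry out a standard recursive selection against a countable base. Since $S$ is Suslin, fix a Polish space $Y$ together with a continuous surjection $f\colon Y \to S$, and consider the lifted multifunction $\Tilde{M}\colon \Omega \tto Y$ defined by $\Tilde{M}(\omega) := f^{-1}(M(\omega))$. Its graph equals $(\mathrm{id}\times f)^{-1}(\gph M)$, which lies in $\mathcal{A}\otimes \mathcal{B}(Y)$ by continuity of $f$. Hence the graph-measurability hypothesis transfers to $\Tilde{M}$, whose target is now Polish and second countable, while any measurable selection of $\Tilde{M}$ yields, after composition with $f$, a measurable selection of $M$.

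Next, fix a countable base $\{U_n\}_{n\in\N}$ of $Y$ and, for each $n$, set $\Omega_n := \{\omega\in \Omega : \Tilde{M}(\omega)\cap U_n\neq \emptyset\}$. This is the projection onto $\Omega$ of $\gph \Tilde{M}\cap (\Omega\times U_n) \in \mathcal{A}\otimes\mathcal{B}(Y)$, so by the projection theorem for Suslin targets (applicable because $(\Omega,\mathcal{A},\mu)$ is complete) one has $\Omega_n\in\mathcal{A}$. On $\Omega_n$ the multifunction $\omega \tto \Tilde{M}(\omega)\cap U_n$ has nonempty, graph-measurable values in a Polish space, so the single-valued version of the selection theorem produces a measurable selector $\Tilde{m}_n\colon \Omega_n\to U_n$. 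Extend $\Tilde{m}_n$ to $\Omega$ by patching with any fixed measurable selector of $\Tilde{M}$ on $\Omega\setminus \Omega_n$ (whose existence follows from the same single-valued statement applied to $\Tilde{M}$ itself), and set $m_n := f\circ \Tilde{m}_n$. Each $m_n$ is measurable and satisfies $m_n(\omega)\in M(\omega)$.

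To verify density, fix $\omega\in\Omega$, $s\in M(\omega)$ and an open $W\subset S$ with $s\in W$. Pick $y\in f^{-1}(s)\subset \Tilde{M}(\omega)$; by continuity $y\in f^{-1}(W)$, and so some base element satisfies $y\in U_n\subset f^{-1}(W)$. Then $\omega\in\Omega_n$, $\Tilde{m}_n(\omega)\in U_n$, and $m_n(\omega)=f(\Tilde{m}_n(\omega))\in f(U_n)\subset W$, so $\{m_n(\omega)\}_n$ is dense in $M(\omega)$. The main obstacle, where all the substance lives, is the single-valued selection step on $\Omega_n$: graph-measurability without closed values is weaker than the hypothesis of Kuratowski--Ryll-Nardzewski, so one must rely on a von Neumann--Aumann style recursive construction in which, at each refinement stage, a partition of $\Omega$ is defined by intersecting with pieces of the countable base, and measurability of these pieces is ensured by iterated applications of the projection theorem. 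This is precisely the step in which completeness of $(\Omega,\mathcal{A},\mu)$ is indispensable.
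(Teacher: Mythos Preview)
The paper does not supply its own proof of this proposition: it is stated as the Yankov--von Neumann--Aumann selection theorem and referred to \cite[Theorem~III.22]{MR0467310} and \cite[Remark~6.3.21]{MR2527754}. So there is no in-paper argument to compare against.

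On the merits of your sketch: the reduction from a Suslin target to a Polish one via the continuous surjection $f$ is sound (the graph of $\Tilde{M}$ is indeed $(\mathrm{id}\times f)^{-1}(\gph M)\in\mathcal{A}\otimes\mathcal{B}(Y)$, values stay nonempty, and selectors push forward along $f$), and the density argument through a countable base together with the projection theorem is correct. The soft spot is your last paragraph: the ``single-valued selection step on $\Omega_n$'' that you invoke is exactly the substance of the von Neumann--Aumann theorem in the Polish case, and you describe the recursive refinement only in outline. That is not an error, but it means your proposal is essentially a reduction to the Polish case followed by a deferral to the core construction---which is not far, in spirit, from what the paper does by citing the literature directly. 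If you want a self-contained proof, the recursive partition by Suslin-scheme restrictions (with measurability of each piece secured by the projection theorem, using completeness of $\mu$) needs to be written out explicitly; otherwise, citing the standard references as the paper does is the honest move.
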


\noindent For $p\in [1,\infty]$, the set of $p$ integrable selections of $M$ is denoted by $S_M^p$.

\noindent We say that $M: \Omega \times H \tto X$ is a \emph{random multifunction} or \emph{random set-valued map} if the map $\omega \tto \gph M_\omega$ is measurable and has closed values.   \newline \noindent Here and throughout the paper,  $M_\omega$ will denote the map $x\tto M(\omega, x)$.


Let $X$ be a Banach space. Given a sequence of sets $(A_k)_k\subset X$, the inferior and superior limit of $(A_k)$ are defined, respectively, as 
\begin{equation*}
\begin{aligned}
\limsup A_k&=\{ x\in X\colon \liminf d(x,A_k)=0\}, \\
 \liminf A_k&=\{ x\in X\colon \limsup d(x,A_k)=0\}, 
\end{aligned}
\end{equation*}
Moreover, a sequence of sets $(A_k)\subset X$ \emph{Painlev\'e-Kuratowski converges} to $A$ if $\limsup A_k = \liminf A_k = A$. If $X$ has finite-dimension, then the Painlev\'e-Kuratowski convergence is metrizable over the space of  nonempty and closed sets by the so-called \emph{(integrated) set distance}: 
\begin{equation}\label{metric0}
    \bar{d}(A,B) = \int_0^{+\infty} e^{-\rho} d_\rho(A,B)\text{d}\rho,
\end{equation}
where $A,B\subset X$ and $ \displaystyle d_\rho(A,B) := \max_{x\in \rho \mathbb{B}}   |d(x,A)-d(x,B)|$ for $\rho\geq 0$. We refer to \cite[cap 4.I]{MR1491362} for more details.
 
Now, we introduce the notion of essentially uniformly convergence of multifunctions.  A sequence of measurable multifunctions $(M_k\colon \Omega \tto \R^d)_{k}$ with closed and nonempty values converges \emph{essentially uniformly} to $M\colon \Omega \tto \R^{d}$  respect to the set distance if 
   \begin{equation}\label{metric1}
    \|\bar{d}(M_k,M)\|_\infty :=\inf\{ t: t\geq\bar{d}(M_k(\omega),M(\omega)) \textrm{ for a.e. } w\in \Omega \} \to  0,
   \end{equation}
   as $k\to +\infty$.  In the latter case, the multifunction $M$ must have nonempty values and closed values. 
   
\noindent The above  notion  induces a convergence of functions through their epigraphs.  A sequence of  normal integrands $(f^k\colon \Omega \times \R^d \to \overline{\R})_{k}$ converges \emph{epigraphically uniformly} to $f\colon \Omega\times \R^d\to\overline{\R}$ if the sequence $(\omega\tto \epi f^k_\omega)_k$ converges essentially uniform to $\omega\tto \epi f_\omega$.


\section{Random multifunctions as minimizers of infinitely many differentiable random functions}\label{MainSECT}

This section presents the main result of the paper. We prove that a random multifunction satisfying certain continuity properties can be described as the set of minimizers of a smooth normal integrand. The continuity property required to prove the main theorem is given in the following definition.
\begin{definition}[pseudo-norm-weak usc]\label{pseudo-uppersemicontinuous} A multifunction $M\colon H\tto X$ is said  \emph{pseudo-norm-weak upper-semicontinuous} at $x\in H$ if for all $\alpha \in \mathbb{R}$ and $y^\ast\in X^\ast$ with  $M(x)\subset \{ u\in X \colon  \langle y^\ast,u\rangle <\alpha \}$ and $\eta>0$, there exists $\varepsilon >0$ such that $$M(x')\subset \{u\in X \colon \langle y^\ast,u\rangle<\alpha+\eta\} \textrm{ for all } x'\in \B_{\varepsilon}(x).$$
Moreover, $M$ is said pseudo-norm-weak usc if the above property is satisfied at every point $x\in H$.
\end{definition}
The above notion is weaker than the usual notion of upper semicontinuity for multifunctions when $X$ is endowed with the weak topology. Indeed, the multifunction $M\colon \R\tto \R^2$ defined by $M(t)=t(1,0)+C$, where $C=\{(x,y)\in \R^2\colon xy\geq 1, x\geq 0\}$, is pseudo-norm-weak usc but it is not upper semicontinuous.

The following result, which is the main result of the paper, enables to represent random multifunctions as the level set of a smooth normal integral. 
\begin{theorem}\label{MainTheorem}
	Let  $(\Omega,\A,\mu)$ be a complete $\sigma$-finite measure space, $\H$ be a separable Hilbert space and $X$ be a separable Banach space. If $M : \Omega \times \H \tto X$  is a random multifunction such that  for all $\omega\in \Omega$, $M_\omega\colon H\tto X$ is pseudo-norm-weak upper semicontinuous with convex values, then there exists a normal integrand $\varphi: \Omega\times \H\times X\to [0,+\infty)$ such that 
	\begin{enumerate}[label=(\alph*), ref=\ref{MainTheorem}-(\alph*)]
		\item\label{MainTheorem_a}  For all $\omega \in \Omega$, $\gph M_\omega=\{(x,y)\in H\times X\colon  \varphi_\omega(x,y) = 0\}$.		\item \label{MainTheorem_b}  For all $\omega\in \Omega$, the map $(x,y)\mapsto \varphi_\omega(x,y)$ is $C^\infty$.
		\item \label{MainTheorem_c}  For all $(\omega,x)\in\Omega\times \H$, the map $y\mapsto \varphi_\omega(x,y)$ is convex.
		\item  \label{MainTheorem_d}  There exists $ L\geq 0$ such that for all  $(\omega,y)\in \Omega\times X$, the map $x\mapsto \varphi_\omega(x,y)+ L(\|y\|+1)\|x\|^2$ is convex.
			\item\label{MainTheorem_e}    For all $k\in\N^{\ast}$, there are positive constants $C_k,R$ such that for all $(x,y)\in H\times X$  
\begin{align}
	\sup_{\omega\in \Omega} \|D^k \varphi_\omega (x,y)\| &\leq  \,C_k( \|y\| +1) (\| x \|^k +1 ), \label{eqder1}\\
		\sup_{\omega\in \Omega} \|D_y^k \varphi_\omega (x,y)\| &\leq  \,R(\|x\|+1), \label{eqder2}
\end{align}
where $D^k\varphi_\omega$ and $D^k_y\varphi_\omega$ denote, respectively,  the $k$-derivative and the $k$-derivative with respect to $y$.
\item \label{MainTheorem_f}   For all $\omega\in \Omega$, the map $(x,y)\mapsto \varphi_\omega(x,y)$ satisfies the following continuity property: if $x_n \to x$ and $y_n \rightharpoonup y$, then $\varphi_{\omega}(x_n,y_n) \to \varphi_{\omega}(x,y)$. 
	\end{enumerate}	  
\end{theorem}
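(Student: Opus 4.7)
My plan is to adapt the Azagra--Ferrera construction of \cite{MR1920049} to the parametrized setting by combining it with a smoothing procedure in the variable $x$. First, using the separability of $X$, I would pick a countable family $\{y_k^*\}_k \subset X^*$ that is rich enough to characterize every closed convex subset $C \subset X$ through the constraints $\langle y_k^*,\cdot\rangle \leq \sigma_C(y_k^*)$. Setting
\begin{equation*}
h_k(\omega,x) := \sigma_{M_\omega(x)}(y_k^*),
\end{equation*}
the very definition of pseudo-norm-weak upper semicontinuity translates into upper semicontinuity in $x$ of each $h_k(\omega,\cdot)$, while graph-measurability of $M$ gives measurability of $h_k$ in $\omega$. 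The convexity of the values yields the representation
\begin{equation*}
\gph M_\omega = \bigcap_k \bigl\{(x,y) \in H \times X : \langle y_k^*, y\rangle \leq h_k(\omega,x)\bigr\}.
\end{equation*}

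The central technical step is to replace each usc $h_k(\omega,\cdot)$ by a decreasing sequence of smooth upper envelopes. Since $H$ is a separable Hilbert space it admits $C^\infty$ partitions of unity, so combining a Lipschitz sup-convolution $h_k^{(m)}(\omega,x) := \sup_{x'\in H}\{h_k(\omega,x') - m\|x-x'\|\}$ (jointly measurable and monotonically decreasing to $h_k$) with a fixed $C^\infty$ partition-of-unity smoothing yields functions $h_{k,n}(\omega,x)$ that are measurable in $\omega$, of class $C^\infty$ in $x$, and satisfy $h_{k,n}(\omega,x) \downarrow h_k(\omega,x)$ as $n\to\infty$, with explicit control on their $x$-derivatives. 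I would then define
\begin{equation*}
\varphi_\omega(x,y) := \sum_{k,n} c_{k,n}\, \theta\bigl(\langle y_k^*,y\rangle - h_{k,n}(\omega,x)\bigr),
\end{equation*}
where $\theta \in C^\infty(\R,[0,\infty))$ is convex, nondecreasing, vanishes on $(-\infty,0]$ together with all derivatives, and is positive on $(0,\infty)$, and the weights $c_{k,n}>0$ are chosen small enough so that every partial derivative converges uniformly on bounded sets and uniformly in $\omega$.

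The verification of \ref{MainTheorem_a}--\ref{MainTheorem_c} would then be essentially formal: $\varphi_\omega(x,y)=0$ iff every summand vanishes iff $\langle y_k^*,y\rangle \leq h_{k,n}(\omega,x)$ for every $k,n$, and taking $\inf_n$ together with the characterizing property of $\{y_k^*\}$ gives $y\in M_\omega(x)$; smoothness is inherited from $\theta$ and $h_{k,n}$, and convexity in $y$ follows from $\theta$ being convex nondecreasing composed with an affine-in-$y$ function. Properties \ref{MainTheorem_d}, \ref{MainTheorem_e} and \ref{MainTheorem_f} would reduce to derivative estimates on the summands: semiconvexity in $x$ comes from bounding the second $x$-derivative of $h_{k,n}$, the polynomial growth \eqref{eqder1}--\eqref{eqder2} comes from the controlled growth of $\langle y_k^*,y\rangle$ (with $\|y_k^*\|\le 1$) and the support properties of $\theta'$, and the weak-to-weak-like continuity \ref{MainTheorem_f} follows from term-by-term passage to the limit combined with dominated convergence based on \ref{MainTheorem_e}.

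The main obstacle I expect is the delicate simultaneous choice of coefficients $c_{k,n}$: they must decay fast enough to dominate the derivative blow-up of the smooth envelopes $h_{k,n}$ (whose Lipschitz constants in $x$ grow like the sup-convolution parameter) while also producing the precise estimates \eqref{eqder1}--\eqref{eqder2} uniformly in $\omega$, and still keeping the zero-set exactly equal to $\gph M_\omega$. A subsidiary point to handle carefully is the case $h_k(\omega,x)=+\infty$, corresponding to $M_\omega(x)$ being unbounded in the direction $y_k^*$; this should be treated by introducing cutoffs (truncating $h_k$ at growing levels $N$ and adding a third index) and checking that the truncated constraints still intersect to $\gph M_\omega$ while preserving joint measurability and the desired smoothness throughout.
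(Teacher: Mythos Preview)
Your proposal has a genuine gap at the very first step. You claim there is a \emph{fixed} countable family $\{y_k^*\}\subset X^*$ rich enough to recover every closed convex $C\subset X$ as $\bigcap_k\{y:\langle y_k^*,y\rangle\le\sigma_C(y_k^*)\}$. No such family exists once $\dim X\ge 2$: if $C=\{y:\langle z^*,y\rangle\le\alpha\}$ is a closed half-space, then $\sigma_C(y^*)=+\infty$ unless $y^*$ is a nonnegative multiple of $z^*$, so each constraint $\langle y_k^*,y\rangle\le\sigma_C(y_k^*)$ with $y_k^*$ off that ray is vacuous. A countable family touches only countably many rays, hence for uncountably many directions $z^*$ your intersection collapses to all of $X$, not $C$. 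Since the theorem places no boundedness restriction on $M_\omega(x)$, the identity $\gph M_\omega=\bigcap_k\{(x,y):\langle y_k^*,y\rangle\le h_k(\omega,x)\}$ fails in general, and with it \ref{MainTheorem_a}. The cutoff you propose for $h_k=+\infty$ does not repair this: it either discards the constraint or imposes an artificial bound that changes the set.

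The paper avoids both this obstruction and your second difficulty (producing $C^\infty$ upper envelopes of an usc function on an infinite-dimensional $H$) in one stroke, by letting the separating data depend measurably on $\omega$. One builds a graph-measurable multifunction $\mathcal{F}(\omega)\subset H\times X\times(0,\infty)\times X^*\times\R$ whose elements $(x,y,\varepsilon,y^*,\alpha)$ record product sets $\B_\varepsilon(x)\times\{u:\langle y^*,u\rangle<\alpha\}$ disjoint from $\gph M_\omega$, and then extracts a dense sequence of measurable selections $(x_k(\omega),\varepsilon_k(\omega),y_k^*(\omega),\alpha_k(\omega))$ via the Yankov--von Neumann--Aumann theorem; the pseudo-norm-weak usc hypothesis is exactly what makes these products cover $(\gph M_\omega)^c$. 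Crucially, an open ball $\B_{\varepsilon_k(\omega)}(x_k(\omega))$ in a Hilbert space is already the superlevel set of the $C^\infty$ function $x\mapsto\langle x_k(\omega),x\rangle-\tfrac12\|x\|^2$, so no smoothing of usc envelopes is ever needed: each building block $\theta_n^1(\omega,x)\,\theta_n^2(\omega,y)$ is smooth from the outset, with Hessian in $x$ bounded below by a multiple of $-I$, and the explicit normalizing weights $\zeta_n(\omega)^n\xi_n(\omega)^n2^n$ deliver \ref{MainTheorem_d} and \ref{MainTheorem_e} uniformly in $\omega$.
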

\begin{proof}
For the measurable set  $\Omega_\emptyset:=\{  \omega \in \Omega \colon  \gph M_\omega = \emptyset\}$, we can set $\varphi_\omega\equiv 1$ for all $\omega \in \Omega_\emptyset$. Thus, without loss of generality, we can assume  that $\Omega_\emptyset =\emptyset$.\newline
\noindent From now on,  we consider $X^{\ast}$ endowed with the weak$^\ast$-topology. Since $X$ is a separable Banach space, the space $X^{\ast}$ is separable and, hence, a Suslin space. It is worth noting that, when $X^\ast$ endowed with the usual topology is separable, we can proceed with that topology.\newline
\noindent The rest of the proof is divided into several claims. \newline
\noindent {\bf Claim~1:} \emph{The mapping $\mathcal{F}:\Omega\tto H \times X\times  (0,+\infty) \times X^\ast \times \R$ defined by 
\begin{eqnarray}
	\nonumber \mathcal{F}(\omega) &:=& \{(x,y,\varepsilon,y^\ast,\alpha)\in H\times X\times (0,\infty)\times X^\ast\times\R\colon\\
	\nonumber & & \gph M_\omega\cap(\B_\varepsilon(x)\times \{u\in X \colon \langle y^\ast,u\rangle<\alpha\})=\emptyset\textrm{ and }\langle y^\ast,y\rangle<\alpha\},
\end{eqnarray}
has nonempty values and measurable graph.} \newline
\noindent \emph{Proof of Claim 1}:   On the one hand, it is clear that $(0,0,1,0,-1)\in\mathcal{F}(\omega)$ for all $\omega \in\Omega$. Thus, $\mathcal{F}$ has nonempty values. On the other hand, to prove that $\mathcal{F}$ has measurable graph, let us consider  the multifunction $G: H \times (0,+\infty) \times X^\ast \times \R \tto H\times X$ defined by
\begin{align*}
    G(x,\varepsilon, y^\ast,\alpha):=   (\B_\varepsilon(x)\times \{u\in X \colon  \langle y^\ast,u\rangle<\alpha\})^c.
\end{align*}
We observe that 
$$G(x,\varepsilon, y^\ast,\alpha) =  G_1(x,\varepsilon) \times X\, \cup\, H \times G_2(y^\ast,\alpha),$$
with $G_1(x,\varepsilon):=\{a \in H\colon\| x - a\| \geq \varepsilon\}$  and $G_2(y^\ast, \alpha):= \{  u \in X\colon \langle y^\ast , u\rangle \geq \alpha\}$. 
Then, for every open set $U\subset H$, we obtain that 
\begin{align*}
G_1^{-}(U)=\{ (x,\varepsilon)\in H\times (0,\infty): \textrm{there exists }  a\in U \textrm{ such that }  \| x -a\| \geq \varepsilon\}.
\end{align*}
Hence, by taking  $(a_k)_{k\in \N} \subset U$ dense, we obtain that  
\begin{align*}
G_1^{-}(U)=\bigcup_{ k\in \N} \{ (x,\varepsilon)\in H\times (0,\infty) : \| x -a_k \| \geq \varepsilon\},
\end{align*}
which implies that $G_1$ is measurable. 
Similarly, $G_2$ is measurable. Hence,  $G$ is measurable.   Now, let us notice that 
\begin{align*}
 \gph M_\omega \cap  (\B_\varepsilon(x)\times & \{u\in X \colon  \langle y^\ast,u\rangle<\alpha\} )= \emptyset\\
&\Leftrightarrow  \gph(M_\omega) \subset G (x,\varepsilon, y^\ast,\alpha) \\
& \Leftrightarrow  d((v,w), G (x,\varepsilon, y^\ast,\alpha)) \leq 0 \text{ for all } (v,w)  \in  \gph M_\omega\\
& \Leftrightarrow  d((v_k(\omega), w_k(\omega)), G (x,\varepsilon, y^\ast,\alpha)) \leq 0 \text{ for all } k\in \N,
\end{align*}
where $(v_k, w_k)$ is a dense sequence of measurable selections of $\gph M_\omega$. Moreover, the map  $(v,w)\mapsto d((v,w),G (x,\varepsilon, y^\ast,\alpha))$ is continuous and $(x,\varepsilon, y^\ast,\alpha)\mapsto d((v,w),G (x,\varepsilon, y^\ast,\alpha))$ is  $\mathcal{B}(H\times (0,\infty)\times X^\ast\times \R)$ measurable. Hence, for all $k\in \N$, the map $(\omega,x,\varepsilon, y^\ast,\alpha)\mapsto d((v_k(\cdot),w_k(\cdot)), G(\cdot))$ is $\mathcal{A}\otimes \mathcal{B}(H\times (0,\infty)\times X^\ast\times \R )$ measurable. Therefore, the graph of $\mathcal{F}$ can be rewritten as
\begin{align*}
\gph(\mathcal{F}) =\{ (\omega, x,y,\varepsilon,y^\ast,\alpha) : \Phi(\omega, x,\varepsilon,y^\ast,\alpha) \leq 0 \text{ and } \langle y^\ast,y\rangle<\alpha \},
\end{align*}
where $\displaystyle \Phi(\omega, x,\varepsilon,y^\ast,\alpha) :=\sup_{k\in\N} d(v_k(\omega), w_k(\omega), G (x,\varepsilon, y^\ast,\alpha))$ and from here we conclude that $\gph(\mathcal{F})$ is a measurable set.\\

\noindent {\bf Claim~2:} {\em There exist $x_k:\Omega\to H$, $\varepsilon_k:\Omega\to (0,\infty)$, $y_k^\ast:\Omega\to X^\ast$, $\alpha_k:\Omega\to \R$ measurable functions such that  for all  $\omega\in \Omega$
\begin{equation}\label{Omega-incl}
 (\gph M_\omega)^c = \bigcup_{k\in\N} \B_{\varepsilon_k(\omega)}(x_k(\omega))\times \{u\in X \colon \langle y_k^\ast(\omega),u\rangle<\alpha_k(\omega)\}.
 \end{equation}
}	\noindent \emph{Proof of Claim 2}: By virtue of of Claim 1 and Proposition \ref{graph:measurable:selection:theorem},  there exists a sequence of measurable selections $(x_k,y_k,\varepsilon_k,y^\ast_k,\alpha_k)$  of $\mathcal{F}$ such that
$$
(x_k(\omega),y_k(\omega),\varepsilon_k(\omega),y^\ast_k(\omega),\alpha_k(\omega)) \textrm{ is dense  in } \mathcal{F}(\omega) \textrm{ for all   } \omega\in \Omega. 
$$
We proceed to prove that the sequence $(x_k,y_k,\varepsilon_k,y^\ast_k,\alpha_k)$ satisfies \eqref{Omega-incl}.  \newline 
\noindent Indeed,  on the one hand, the inclusion $\supset$ follows by construction. On the other hand, to prove the inclusion $\subset$, take $(x,y)\notin \gph M_\omega$, i.e.,  $y\notin M_\omega(x)$. \newline
\noindent  If $M_\omega(x) = \emptyset$, we can take any $y^\ast\in X^\ast$ and $\alpha\in\R$ such that $\langle y^\ast ,y\rangle<\alpha$, and the neighborhood $\mathcal{U} := \{u\in X\colon \langle y^\ast,u\rangle> \alpha+\xi\}$ for any $\xi >0$. Thus,  $M_\omega(x)\subset \mathcal{U}$ and by the pseudo-norm-weak upper semicontinuity,  there exists $\varepsilon >0$ such that for all $x'\in \B_{\varepsilon}(x)$ the following inclusion holds:
$$M_\omega(x')\subset \{u\in X\colon  \langle y^\ast,u\rangle + \xi>\alpha+\xi\}.$$ Hence, it follows that 
$$\gph(M_\omega)\cap (\B_{\varepsilon}(x)\times\{u\in X \colon \langle y^\ast,u\rangle< \alpha\}) = \emptyset.$$
\noindent  If $M_\omega(x)\neq \emptyset$, by the Hahn-Banach theorem ($M_\omega(x)$ is closed and convex), there exist $y^\ast\in X^\ast$ and $\beta>\alpha$ that 
$$ \langle y^\ast,z\rangle\geq \beta, \textrm{ for all } z\in M_\omega(x) \text{ and } \langle y^\ast,y\rangle<\alpha.$$ 
Moreover, there exists $\xi>0$ such that $\beta >\xi>\alpha$.  Consider the neighbourhood $\mathcal{U} := \{u\in X \colon \langle y^\ast,u\rangle>\xi\}$. Then,  $M_\omega(x)\subset \mathcal{U}$ and by the pseudo-norm-weak upper-semicontinuity of $M_\omega$, there exists $\varepsilon >0$ such that for all $x'\in \B_{\varepsilon}(x)$ the following inclusion holds:
$$M_\omega(x')\subset \{u\in X \colon \langle y^\ast,u\rangle + (\xi-\alpha)>\xi\}.$$  Thus,  $ \gph(M_\omega)\cap (\B_{\varepsilon}(x)\times \{u\in X \colon \langle y^\ast,u\rangle<\alpha\}) = \emptyset $.  \newline 
Therefore, in any case,  $(x,y,\varepsilon,y^\ast,\alpha)\in \mathcal{F}(\omega)$ and $\langle y^\ast,y\rangle<\alpha$. Let us consider $\delta := \alpha-\langle y^\ast,y\rangle>0$. Then, there exists $j\in\N$ such that $$(x_j(\omega),y_j(\omega),\varepsilon_j(\omega),y_j^\ast(\omega),\alpha_j(\omega))\in \mathcal{F}(\omega), $$
and
$$\max\{ \|x-x_j(\omega)\|,|\varepsilon_j(\omega)-\varepsilon|\}<\varepsilon/2,  \max\{|\alpha_j(\omega)-\alpha|, |\langle y^\ast_j(\omega)-y^\ast,y\rangle|\}<\delta/2.$$ Then, since $\|x-x_j(\omega)\|\leq \varepsilon/2 <\varepsilon_j(\omega)$, we obtain that  $x\in \B_{\varepsilon_j(\omega)}(x_j(\omega))$.
\noindent Moreover,  
\begin{eqnarray*}
	\langle y^\ast_j(\omega),y\rangle &=& \langle y^\ast_j(\omega)-y^\ast,y\rangle + \langle y^\ast,y\rangle\\
	 &< & \delta/2 + \langle y^\ast,y\rangle  =  \delta/2 + \alpha - \delta\\
	 & < & \delta/2 + (\alpha_j(\omega)+\delta/2) - \delta = \alpha_j(\omega).
\end{eqnarray*}
Hence,  $(x,y)\in \B_{\varepsilon_j(\omega)}(x_j(\omega))\times \{u\in X \colon \langle y_j^\ast(\omega),u\rangle<\alpha_j(\omega)\}$. \qed
\newline  \noindent \vspace{0.2cm} {\bf Claim~3:}  {\em There exist $(x_k^\ast,\beta_k,z_k^\ast,\gamma_k)$ measurable functions such that 
\begin{equation*}
	\gph M_\omega = \bigcap_{k\in \N}(A_k(\omega)\times B_k(\omega))^c \textrm{ for  all  } \omega\in \Omega,
\end{equation*}
where $A_k$ and $B_k$ are defined by
\begin{equation*}
\begin{aligned}
A_k(\omega) &= \{x\in H \colon \langle x,x_k^\ast(\omega)\rangle-\frac{1}{2}\|x\|^2>\beta_k(\omega)\},\\
B_k(\omega) &= \{u\in X \colon \langle z_k^\ast(\omega),u\rangle>\gamma_k(\omega)\}.
\end{aligned}
\end{equation*}}
\noindent \emph{Proof of Claim 3}: On the one hand, by virtue of Claim 1, 
\begin{eqnarray*}
	 \B_{\varepsilon_k(\omega)}(x_k(\omega)) &=& \{x\in H \colon \frac{1}{2}\|x-x_k\|^2<\frac{1}{2}\varepsilon_k(\omega)^2\}\\
	 &=& \{ x\in H \colon \frac{1}{2}\|x\|^2-\langle x,x_k(\omega)\rangle + \frac{1}{2}\|x_k(\omega)\|^2<\frac{1}{2}\varepsilon_k(\omega)^2 \}\\
	&=& \{ x\in H \colon \langle x,x_k^\ast(\omega)\rangle-\frac{1}{2}\|x\|^2>\beta_k(\omega) \} =:A_k(\omega),
\end{eqnarray*}
where $\beta_k(\omega):= \frac{1}{2}\|x_k(\omega)\|^2-\frac{1}{2}\varepsilon_k(\omega)^2$ and $x_k^\ast(\omega) = x_k(\omega)$. On the other hand, by defining  $z_k^\ast = -y_k^\ast$ and $\gamma_k = -\alpha_k$, we can take $B_k(\omega):= \{u\in X \colon \langle z_k^\ast(\omega),u\rangle>\gamma_k(\omega)\}$, which proves the claim.

\noindent {\bf Claim~4:}  {\em Theorem \ref{MainTheorem} holds:\\}
Following the ideas from \cite{MR1920049}, let us  consider a nondecreasing  $C^\infty$ convex function $\theta : \R \to [0,+\infty)$ such that 
	\begin{align}\label{functiontheta}
	\theta(s )=\begin{cases}
	0 & \textrm{ for } s\leq 0,\\
	s+b & \textrm { for } t\geq 1,
	\end{cases}
	\end{align}
	for some $b \in (-1,0)$. It is important to emphasize that $b$ does not play any role in the proof. However, as far as we know, it is not possible to find such a function for $b =0$. \newline \noindent The function  $\theta$ satisfies the following inequality:
	\begin{align}\label{inq0001}
	\theta(s) &\leq \theta(1) + s + |b|\text{ for all } s\in \R.
	\end{align}
	Moreover, for each $k \in  \N^{\ast}$,  its  $k$-th  derivative is  uniformly bounded, i.e., 
	$$\| \theta^{(k)} \|_\infty:=\sup\{ |\theta^{(k)}(s)| \colon  s\in \R \} < \infty.$$
	Next,  we consider the function $\varphi: \Omega\times H\times X\to [0,\infty)$ defined by 
\begin{equation*}
\varphi_\omega(x,y) = \sum_{n\in \N} \frac{\theta_n^1(\omega,x)\cdot \theta_n^2(\omega,y)}{\zeta_n(\omega)^n\xi_n(\omega)^n 2^n},
\end{equation*}  
where 
\begin{equation*}
\begin{aligned}
    \theta_n^1(\omega,x) &:= \theta\left(\frac{1}{\zeta_n(\omega)}(\langle x_n^\ast(\omega),x\rangle -\frac{1}{2}\|x\|^2-\beta_n(\omega))\right),\\
    \theta_n^2(\omega,y) &:= \theta\left(\frac{1}{\xi_n(\omega)}(\langle z_n^\ast(\omega),y \rangle-\gamma_n(\omega))\right),\\
\zeta_n(\omega) &:= 1+|\beta_n(\omega)| + \|x_n^\ast(\omega)\|,\\
 \xi_n(\omega) &:= 1+\|z_n^\ast(\omega)\|+|\gamma_n(\omega)|.
	\end{aligned}
\end{equation*}
   By virtue of Claim 3, it is easy to see that  for all  $w\in \Omega$, the following equivalence holds:
   $$\varphi_\omega(x,y) = 0\iff (x,y)\in \gph M_\omega.$$
      Since $\theta$ is convex, the map $y\mapsto \varphi_\omega(x,y)$ is convex for all $(\omega,x)\in\Omega\times H$. Moreover, by \cite[proposition~4.1]{MR1016045}, the function $x \mapsto \varphi_\omega (x,y)  + L\|y\|\| x\|^2$ is convex, for some $L\geq 0$, depending of the Lipschitz constant of $\theta$.\newline 
   \noindent    Using the Faà  di Bruno's formula (see, e.g., \cite[Lemma 5.1]{MR4093728}),  for all $k\in\N^{\ast}$, there are constants $C_k, R>0$ (independent of $\omega$, $x$ and $y$) such that $$ \|D^k \theta^1_n(\omega,x)\| \leq C_k(\|x\|^k + 1) \text{ and } \|D^k \theta^2_n(\omega,y)\| \leq R.$$ Then, by the Leibniz Rule applied to $\varphi_\omega$, these inequalities imply that \eqref{eqder1} and \eqref{eqder2} hold. Thus, $\varphi_\omega$ is a $C^\infty$ function.\newline 
   \noindent  To end the proof, it remains to verify assertion $(f)$. Let  $x_k\to x$ in $H$ and $y_k\rightharpoonup y$ in $X$. It is clear that for all $n\in \N$ and $\omega\in\Omega$,  $\theta_n^1(\omega,x_k)\to \theta_n^1(\omega,x)$ and $\theta_n^2(\omega,y_k)\to \theta_n^2(\omega,y)$ as $k\to +\infty$. Hence, as $k\to +\infty$, $$ \frac{\theta_n^1(\omega,x_k)\cdot \theta_n^2(\omega,y_k)}{\zeta_n(\omega)^n\xi_n(\omega)^n 2^n} \to \frac{\theta_n^1(\omega,x)\cdot \theta_n^2(\omega,y)}{\zeta_n(\omega)^n\xi_n(\omega)^n 2^n}.$$ Finally, by the convergence dominated theorem,  we conclude that  for all $\omega \in \Omega$, $\varphi_\omega(x_k,y_k)\to  \varphi_\omega(x,y)$, which ends the proof of the Theorem. 
\end{proof}
\begin{remark} The previous theorem was stated in a Hilbert space $H$. However, a similar result can be obtained in smooth  Banach spaces if we accept a less regular function $\varphi$. Indeed, assume that $H$ is a smooth separable Banach space. Then, we can replace the set $A_k(\omega)$  of Claim 3 by $$A_k(\omega) = \{ x\in H\colon\|x-x_k(\omega)\|^p<\varepsilon_k(\omega)^p \},$$ where $p>1$ is fixed. In Claim 4, the functions $\theta_n^1$ and $\zeta_n$ can be modified by 
\begin{equation*}
\begin{aligned}
\theta_n^1(\omega,x)&= \theta(\varepsilon_n(\omega)^p-\|x-x_n(\omega)\|^p),\\
\zeta_n(\omega)&= 1+\varepsilon_n(\omega)^p+\|x_n(\omega)\|^{p-1}.
\end{aligned}
\end{equation*} 
The obtained function $\varphi$ will be $C^1$ and we can find a constant $C>0$ such that for all $(\omega,x,y)\in \Omega\times H\times X$
    $$  \|D\varphi_\omega(x,y)\|\leq C(\|x\|^{p-1}+1)(\|y\|+1). $$ Furthermore,  \eqref{eqder2} still holds true and the function $y\mapsto \varphi_\omega(x,y)$ is  $C^\infty$. \qed
\end{remark}

As a consequence of Theorem \ref{MainTheorem}, we obtain a generalization of the main result in \cite{MR1920049}. Indeed, \cite[Theorem~1]{MR1920049} establishes that every closed convex set in a separable Banach space can be seen as the set of minimizers of a $C^\infty$ convex function. Here, we obtain a stronger conclusion: the values of any measurable multifunction with closed and convex values can be written as the minimizers of a $C^\infty$ convex normal integrand.
\begin{corollary}\label{mainconvex}  
		Let $M: \Omega \tto X$ be measurable multifunction with closed and convex values. Then, there exists $\varphi \colon \Omega \times X\to [ 0, +\infty)$ a convex normal integrand     such that  for all $\omega\in \Omega$ the map $x\mapsto \varphi_{\omega}(x)$ is $C^{\infty}$ and
		\begin{equation}\label{eqmainconvex}
		M(\omega) =\{ x \in X\colon  \varphi_{\omega}(x) = 0 \} \textrm{ for all }\omega \in \Omega.
		\end{equation}
		Moreover, for all $k\in\N^{\ast}$,
		\begin{align}\label{eq00main}
		\sup_{(\omega,x)\in \Omega\times X} \| D^k \varphi_\omega ( x)\|< +\infty.
		\end{align} 
	\end{corollary}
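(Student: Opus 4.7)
The plan is to deduce Corollary \ref{mainconvex} directly from Theorem \ref{MainTheorem} by embedding the $x$-independent situation into the setting of a random multifunction. Concretely, pick any separable Hilbert space $H$ (for instance $H=\R$) and define the constant-in-$x$ random multifunction $N\colon\Omega\times H\tto X$ by $N(\omega,x):=M(\omega)$. Then $\gph N_\omega=H\times M(\omega)$, which is closed since $M(\omega)$ is closed, and its dependence on $\omega$ is measurable because $M$ is measurable with closed values (by standard equivalences, measurability of $M$ is preserved by the product with the constant factor $H$). Each $N_\omega$ is a constant multifunction with convex values, hence trivially pseudo-norm-weak upper-semicontinuous in the sense of Definition \ref{pseudo-uppersemicontinuous}. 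Thus $N$ satisfies the hypotheses of Theorem \ref{MainTheorem}.

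Next, apply Theorem \ref{MainTheorem} to $N$ to obtain a normal integrand $\Phi\colon\Omega\times H\times X\to[0,+\infty)$ satisfying conclusions (a)--(f). Fix an arbitrary base point $x_0\in H$ (say $x_0=0$) and define
\begin{equation*}
\varphi_\omega(y):=\Phi_\omega(x_0,y),\qquad (\omega,y)\in\Omega\times X.
\end{equation*}
Because $\Phi$ is a normal integrand on $\Omega\times H\times X$ and is continuous in $(x,y)$ for each $\omega$, the map $(\omega,y)\mapsto\varphi_\omega(y)$ is $\mathcal{A}\otimes\mathcal{B}(X)$-measurable and continuous (in fact $C^\infty$) in $y$, so $\varphi$ is a normal integrand. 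The identity \eqref{eqmainconvex} follows from \ref{MainTheorem_a}, since
\begin{equation*}
\varphi_\omega(y)=0\iff\Phi_\omega(x_0,y)=0\iff (x_0,y)\in\gph N_\omega\iff y\in M(\omega).
\end{equation*}
Convexity of $\varphi_\omega$ and the $C^\infty$ regularity follow from \ref{MainTheorem_c} and \ref{MainTheorem_b} by restriction. The crucial point is the uniform bound \eqref{eq00main}: here we use \eqref{eqder2} at the fixed point $x=x_0$, which gives
\begin{equation*}
\sup_{(\omega,y)\in\Omega\times X}\|D^k\varphi_\omega(y)\|=\sup_{(\omega,y)\in\Omega\times X}\|D^k_y\Phi_\omega(x_0,y)\|\le R(\|x_0\|+1)<+\infty.
\end{equation*}

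There is no substantial obstacle: the entire content of the corollary sits inside Theorem \ref{MainTheorem}, and the only subtlety is recognizing that the bound on $y$-derivatives in \eqref{eqder2} (rather than the full bound \eqref{eqder1}, which grows with $\|x\|$) is what yields a bound that is uniform in $(\omega,x)$ after freezing $x=x_0$. Once that observation is made, items (b), (c), \eqref{eqmainconvex}, and \eqref{eq00main} are immediate restrictions of Theorem \ref{MainTheorem} to the slice $\{x_0\}\times X$, and the normal-integrand property of $\varphi$ follows from that of $\Phi$ via standard measurability arguments under completeness of $(\Omega,\mathcal{A},\mu)$.
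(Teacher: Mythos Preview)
Your proposal is correct and follows essentially the same approach as the paper: define a constant-in-$x$ random multifunction, apply Theorem \ref{MainTheorem}, and restrict the resulting integrand to a fixed $x$-slice. The paper simply takes $H=\{0\}$ (so the base point is forced), whereas you allow an arbitrary separable Hilbert space and fix $x_0$; your explicit use of \eqref{eqder2} to obtain the uniform bound \eqref{eq00main} is in fact cleaner than the paper's reference at that step.
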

	
\begin{proof} Define $H=\{0\}$.  It is enough to apply Theorem \ref{MainTheorem} to the set-valued map $\hat{M}:\Omega\times H\tto X$ defined by $\hat{M}(\omega,x) = M(\omega)$. Indeed, it is clear that $\gph \hat{M}_\omega = H\times M(\omega)$ and, thus, $\omega\tto \gph \hat{M}_\omega$ is a random multifunction (i.e.,  is measurable with closed values). Moreover, for all $\omega\in \Omega$, $\hat{M}_\omega$ is pseudo-norm-weak upper semicontinuous. Hence, by virtue of Theorem \ref{MainTheorem}, we can find $\phi:\Omega\times \{0\}\times X\to [0,+\infty)$ a  $C^\infty$ normal integrand. Then, the function $\varphi_\omega:=\phi_\omega(0,\cdot )$ is  a $C^\infty$ normal integrand over $\Omega\times X$. Moreover, we have that for a.e. $\omega\in \Omega$,
    \begin{equation*}
y\in M(\omega)\iff (0,y)\in \gph \hat{M}_\omega \iff\phi_\omega(0,y) = 0 \iff \varphi_\omega(y) = 0.
    \end{equation*}
    The convexity of $\varphi_\omega$ follows from Theorem \ref{MainTheorem_c}. Finally,  \eqref{eq00main} follows from Theorem \ref{MainTheorem_d}. 
\end{proof}

Theorem \ref{MainTheorem}  allows us to provide also a representation for measurable multifunctions with merely closed values on separable Hilbert spaces.
\begin{corollary}\label{mainclosed}
	Let $H$ be a separable Hilbert space and $M: \Omega \tto H$ be a measurable multifunction with closed values. Then, there exists a normal integrand function $\varphi :\Omega \times H\to [0, +\infty)$ such that 
	\begin{enumerate}
	\item[(a)] for all  $\omega\in \Omega$, $M(\omega)=\{x\in H \colon \varphi_{\omega}(x)=0\}$.
	\item[(b)] for all $\omega\in \Omega$ the map $x\mapsto \varphi_{\omega}(x)$ is $C^\infty$ and for some $L\geq 0$ the map $x\mapsto \varphi_\omega(x) + L\|x\|^2$ is convex
	\item[(c)] for all $k \in  \mathbb{N}^{\ast}$, there are constants $a_k, b_k >0$ such that  for all $x\in H$   
	\begin{align}\label{eq00main:closed}
	\sup_{\omega\in \Omega} \|  D^k \varphi_{\omega}(x)\| \leq a_k  \| x\|^k + b_k.
	\end{align}
	\end{enumerate}
\end{corollary}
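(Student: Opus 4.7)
The plan is to reduce this to Theorem~\ref{MainTheorem} by a trick dual to the one used for Corollary~\ref{mainconvex}: instead of collapsing the domain, I would collapse the codomain. Concretely, define an auxiliary random multifunction $\hat M : \Omega\times H \tto \mathbb{R}$ by
\begin{equation*}
\hat M(\omega, x) := \begin{cases} \{0\} & \textrm{if } x \in M(\omega), \\ \emptyset & \textrm{otherwise},\end{cases}
\end{equation*}
so that $\gph \hat M_\omega = M(\omega)\times \{0\}$. The crucial observation is that, although $M(\omega)$ may fail to be convex, every value of $\hat M_\omega$ is either $\{0\}$ or $\emptyset$, hence convex, so the convexity hypothesis of Theorem~\ref{MainTheorem} is fulfilled. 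Starting from a Castaing representation $(m_k)_k$ of $M$, the sequence $((m_k(\omega),0))_k$ provides a dense set of measurable selections of $\omega\tto \gph \hat M_\omega$, so $\omega\tto M(\omega)\times\{0\}$ is measurable with closed values and $\hat M$ is indeed a random multifunction.

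Next I would verify that $\hat M_\omega$ is pseudo-norm-weak upper semicontinuous for every $\omega$. There are two cases. If $x\in M(\omega)$, then $\hat M_\omega(x)=\{0\}$, so whenever $\{0\}\subset\{u: \langle y^\ast, u\rangle<\alpha\}$ we automatically have $0<\alpha<\alpha+\eta$, and for \emph{any} $x'\in H$ the value $\hat M_\omega(x')$ is either $\{0\}$ or $\emptyset$, both of which lie in the enlarged half-space. If $x\notin M(\omega)$, closedness of $M(\omega)$ provides a ball $\B_\varepsilon(x)$ disjoint from $M(\omega)$; on this ball $\hat M_\omega(x')=\emptyset$, trivially contained in any half-space. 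Hence Theorem~\ref{MainTheorem} applies and yields a $C^\infty$ normal integrand $\phi: \Omega\times H\times\mathbb{R}\to[0,+\infty)$ satisfying properties (a)--(f) of that theorem.

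Finally I would set $\varphi_\omega(x):=\phi_\omega(x,0)$, which is a normal integrand on $\Omega\times H$ because $x\mapsto(x,0)$ is continuous and $\phi$ is a normal integrand. Then (a) follows from $x\in M(\omega)\iff (x,0)\in\gph\hat M_\omega\iff \phi_\omega(x,0)=0$; the $C^\infty$ half of (b) is inherited from $\phi_\omega$, and specializing Theorem~\ref{MainTheorem_d} to $y=0$ gives the convexity of $x\mapsto \varphi_\omega(x)+L\|x\|^2$; for (c), since $D^k\varphi_\omega(x)$ is the restriction of $D^k\phi_\omega(x,0)$ to $H^k$, the bound \eqref{eqder1} evaluated at $y=0$ gives $\|D^k\varphi_\omega(x)\|\le C_k(\|x\|^k+1)$, so one may take $a_k=b_k=C_k$. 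No serious obstacle is expected: the essence of the argument is the observation that non-convexity of $M(\omega)$ is harmless once it is encoded through the set of $x$-coordinates of $\gph\hat M_\omega$ rather than as a value of $\hat M_\omega$; all remaining verifications are direct specializations of the main theorem.
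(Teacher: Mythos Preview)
Your proposal is correct and follows essentially the same route as the paper: define an auxiliary random multifunction with singleton-or-empty values so that its graph is $M(\omega)\times\{0\}$, verify the pseudo-norm-weak usc property by the same two-case split, apply Theorem~\ref{MainTheorem}, and restrict the resulting integrand to $y=0$. The only cosmetic difference is that the paper takes the codomain to be the trivial space $X=\{0\}$ rather than $X=\mathbb{R}$; both choices work and lead to the same specializations of items (d) and (e) of Theorem~\ref{MainTheorem}.
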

\begin{remark}
It is worth emphasizing that in Corollary \ref{mainclosed}, although the values of $M(\omega)$ are merely closed, the obtained function $\varphi_{\omega}$ is not so far from being a convex one. Indeed, $\varphi_{\omega}$ is the difference of convex functions:  $\varphi_{\omega}=(\varphi_{\omega}+L\Vert \cdot\Vert^2)-L\Vert \cdot\Vert^2$.
\end{remark}
\begin{proof} Set $X=\{0\}$ and consider the multifunction $\hat{M}:\Omega\times H\tto X$ defined by
     \[\hat{M}(\omega,x) = \begin{cases}
        \{0\} & \text{if } x\in M(\omega),\\
        \emptyset & \text{if } x\notin M(\omega).
    \end{cases}\]
    It is clear that $\gph \hat{M}_\omega = M(\omega)\times \{0\}$. Thus, $\hat{M}$ is a random multifunction. To prove that $\hat{M}$ satisfies  the pseudo-norm-weak usc property,  fix $\omega \in \Omega$ and assume that $\hat{M}_\omega(x)\subset \{ u\in X \colon  \langle y^\ast,u\rangle<\alpha \}$ for $y^\ast\in X^\ast=\{0\}$ and $\alpha\in \R$. On the one hand, if   $\hat{M}_\omega(x)=\{0\}$, then clearly $\{ u\in X \colon \langle y^\ast,u\rangle<\alpha \} =X$. On the other hand, if  $\hat{M}_\omega(x) = \emptyset$, then $x\notin M(\omega)$. Since $M(\omega)$ is closed,  there exists $\varepsilon >0$ such that $\B_\varepsilon(x)\cap M(\omega)=\emptyset$ and then $\hat{M}_\omega(x') = \emptyset$ if $x'\in \B_\varepsilon(x)$. Hence, for every $\eta>0$,  $\hat{M}_\omega(x')\subset\{ u\in X \colon \langle y^\ast,u\rangle<\alpha+\eta \}$.  
    Therefore, in any case, the pseudo-norm-weak usc property holds. \newline 
    By virtue of  Theorem \ref{MainTheorem}, there exists $\phi:\Omega\times H\times \{0\}\to [0,+\infty)$  a $C^\infty$ normal integrand representing the set-valued mapping $\hat{M}$. Thus, the function $\varphi_\omega := \phi_\omega(\cdot,0)$ is a $C^\infty$ normal integrand such that for a.e. $\omega\in \Omega$
 \begin{equation*}
         x\in M(\omega)  \iff  (x,0)\in \gph \hat{M}_\omega\iff \phi_\omega(x,0) = 0 \iff \varphi_\omega(x) = 0.
    \end{equation*}
    Finally,  due to Theorem \ref{MainTheorem}, the map $x\mapsto \varphi_\omega(x)+L\|x\|^2$ is  convex and for some $L\geq 0$  the inequality (\ref{eq00main:closed}) holds.
\end{proof}

We end this section by providing a representation result for multifunctions with values in dual spaces, similar to Corolary \ref{mainconvex}, where we prove that lower semicontinuity of the epigraphs of the support functions induces continuity in both variables for normal integrands.
\begin{theorem} Let $T$ be a metric space and  $X$ a separable Banach space. Assume that $C\colon T\tto X^\ast $ is a multifunction with nonempty, $w^\ast$-closed and  convex  values so that $t \tto \epi \sigma_{C(t)}$ is lsc. Then, there exists a $C^\infty$-convex  normal integrand $\varphi :T \times X^\ast \to [ 0, +\infty)$  such that 
$$x^\ast\in C(t) \Leftrightarrow \varphi(t,x^\ast)=0 \textrm{ for all } (t,x^\ast)\in T\times X^\ast.$$
Moreover, $\varphi$ can be chosen so that, for all $k \in  \mathbb{N}^{\ast}$, the map  $(t,x^\ast)  \to  D^k \varphi_t( x^\ast)$ be continuous  with $\displaystyle\sup_{(t,x^{\ast})\in T\times X^{\ast}} \| D^k \varphi_t(x^\ast)\| < +\infty$.
\end{theorem}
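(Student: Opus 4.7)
The plan is to adapt the construction of Theorem \ref{MainTheorem} by replacing the measurable Castaing-type representation with a continuous one. The key reduction is provided by the bipolar theorem: since $C(t)$ is nonempty, $w^{\ast}$-closed and convex,
\[
x^{\ast}\in C(t)\ \Longleftrightarrow\ \langle x,x^{\ast}\rangle\leq \alpha\quad\text{for every }(x,\alpha)\in \epi\sigma_{C(t)},
\]
so that $C(t)$ is completely determined by $\epi\sigma_{C(t)}$, and it suffices to describe the latter by a countable family depending continuously on $t$.

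Since $t\tto \epi\sigma_{C(t)}$ is lower semicontinuous from the metric (hence paracompact) space $T$ into the separable Banach space $X\times \R$, with nonempty closed convex values, a standard iteration of Michael's selection theorem produces continuous selections $(x_n,\alpha_n)\colon T\to X\times \R$ such that $\{(x_n(t),\alpha_n(t))\}_{n\in\N}$ is dense in $\epi\sigma_{C(t)}$ for every $t\in T$. Setting $\zeta_n(t):=1+\|x_n(t)\|+|\alpha_n(t)|$ and using the function $\theta$ of \eqref{functiontheta}, define
\[
\varphi(t,x^{\ast}):=\sum_{n\in\N}\frac{1}{2^{n}}\,\theta\!\left(\frac{\langle x_n(t),x^{\ast}\rangle-\alpha_n(t)}{\zeta_n(t)}\right).
\]
Each summand is jointly continuous in $(t,x^{\ast})$, convex in $x^{\ast}$ (composition of the convex nondecreasing $\theta$ with an affine map) and $C^{\infty}$ in $x^{\ast}$, and these properties will transfer to $\varphi$ once uniform convergence of the derivative series is verified.

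The zero-set verification is short: $\varphi(t,x^{\ast})=0$ forces $\langle x_n(t),x^{\ast}\rangle\leq \alpha_n(t)$ for every $n$; density of the selections in $\epi\sigma_{C(t)}$ then yields $\langle x,x^{\ast}\rangle\leq \sigma_{C(t)}(x)$ for every $x\in X$, hence $x^{\ast}\in C(t)$ by the bipolar identification; the converse is immediate. For the quantitative part, note that the argument of $\theta$ is affine in $x^{\ast}$ with coefficient $x_n(t)/\zeta_n(t)$ of norm at most $1$; by the chain rule,
\[
\|D^{k}\varphi_t(x^{\ast})\|\leq \sum_{n\in\N}\frac{1}{2^{n}}\,\|\theta^{(k)}\|_{\infty}\left(\frac{\|x_n(t)\|}{\zeta_n(t)}\right)^{k}\leq \|\theta^{(k)}\|_{\infty},
\]
uniformly in $(t,x^{\ast})$. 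The Weierstrass $M$-test then delivers uniform convergence of every derivative series, providing the $C^{\infty}$-regularity of $x^{\ast}\mapsto\varphi_t(x^{\ast})$, the joint continuity of $(t,x^{\ast})\mapsto D^{k}\varphi_t(x^{\ast})$, and the joint continuity of $\varphi$, which makes it a normal integrand.

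The main obstacle I expect is the construction of the continuous Castaing-type representation: producing a countable family of \emph{continuous} selections whose values are dense in each fiber $\epi\sigma_{C(t)}$. This is classical but technical, combining Michael's selection theorem, applied on the open sets $\{t:\epi\sigma_{C(t)}\cap B(\xi,1/k)\neq\emptyset\}$ (open by lower semicontinuity), with a partition-of-unity gluing; this is the only step where the lower-semicontinuity hypothesis on $\epi\sigma_{C(\cdot)}$ genuinely enters the argument.
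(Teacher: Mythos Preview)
Your proposal is correct and follows essentially the same route as the paper: invoke Michael's selection theorem to obtain continuous selections $(x_n(t),\alpha_n(t))$ dense in $\epi\sigma_{C(t)}$, then build $\varphi$ as a weighted sum of $\theta$ applied to the normalized affine functionals. The paper's formula carries an additional $\xi_n(t)$ factor in the outer denominator, but this is cosmetic; your version already yields uniform convergence of all derivative series and the required bounds, and your write-up is in fact more explicit than the paper's, which simply refers back to the arguments of Corollary~\ref{mainconvex}.
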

\begin{proof} As a consequence of Michael's selection theorem (see, e.g., \cite[Theorem 6.3.11, p. 491]{MR2527754}), there are continuous functions $z_n \colon T\to X^{\ast}$ and $\gamma_n \colon T \to \R$ such that 
\begin{equation*}
	\epi \sigma_{C(t)}= \cl\{ (z_n(t),\gamma_n(t))_n \} .
\end{equation*}
	Then, we consider the function $\varphi: T \times X^\ast\to [0,\infty)$ defined by 
\begin{equation*}
\varphi_t(x^\ast) = \sum_{n\in \N} \frac{1}{2^n}\frac{ \theta_n(t,x^\ast)}{\xi_n(t)},
\end{equation*}  
where  
\begin{equation*}
 \theta_n(t,x^\ast) := \theta\left(\frac{1}{\xi_n(t)}(\langle x^\ast,z_n(t) \rangle-\gamma_n(t))\right) \textrm{ and } \xi_n(t)  :=  1+\|z_n(t)\|+|\gamma_n(t)|.
\end{equation*} 
Here $\theta$ is the smooth convex function defined in \eqref{functiontheta}. Hence, the result follows from similar arguments to the given in proof of  Theorem \ref{mainconvex}.
\end{proof}

\section{Approximation of random multifunctions and functions}\label{Approxima}

In this section, by using Theorem \ref{MainTheorem}, we provide smooth  approximations to random multifunctions and normal integrands.

\begin{corollary}\label{CorAprRandomMult}
Let $H$ be a separable Hilbert space and $X$ a separable Banach space. Let $M\colon\Omega \times H \tto X$  be a random multifunction such that,  for all $\omega\in \Omega$, $M_\omega$ is pseudo-norm-weak upper semicontinuous with convex values.  Then, there exists a sequence of random multifunctions $ M^k\colon\Omega \times  H \tto X $ with convex values such that:
\begin{enumerate}[label=\alph*)]
    \item \label{CorAprRandomMult_a} For all $\omega\in \Omega$, $\gph M_\omega ^{k+1}\subset \gph M_\omega ^k$ for all $k\in \N$ and  $$\bigcap_{k\in \N } \gph M_\omega ^k = \gph M_\omega.$$ 
    \item \label{CorAprRandomMult_b} For all $\omega\in \Omega$, $\| \cdot\|\times w$-$\limsup \gph M_\omega ^k \subset  \gph M_\omega$. 
    \item \label{CorAprRandomMult_c} For all $\omega\in\Omega$, if $x\in \operatorname{dom}(M_\omega)$ and $k\in \N$, there exists a neighbourhood $U$ of $x$ such that for all $x'\in U $ the sets $M_\omega^k(x')$ are $C^\infty$-convex bodies.
    \item \label{CorAprRandomMult_d} If $\operatorname{dom}(M_\omega)=H$ for a.e. $\omega\in \Omega$, then  for all $k\in\N$ the set $\gph M_\omega^k$ has  $C^\infty$-smooth boundary for a.e. $\omega\in \Omega$.
\end{enumerate}
\end{corollary}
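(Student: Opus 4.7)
The plan is to build the approximating random multifunctions directly from the smooth normal integrand produced by Theorem \ref{MainTheorem}. Let $\varphi:\Omega\times H\times X\to[0,+\infty)$ be associated with $M$ as in Theorem \ref{MainTheorem}, and set
\[
M_\omega^k(x):=\{y\in X\colon \varphi_\omega(x,y)\le 1/k\},\qquad k\in\N^{\ast}.
\]
Convexity of $M_\omega^k(x)$ follows immediately from the convexity of $y\mapsto\varphi_\omega(x,y)$ established in Theorem \ref{MainTheorem_c}. Measurability of $\omega\tto\gph M_\omega^k$ comes from $\varphi$ being a normal integrand, so that $\{(\omega,x,y):\varphi_\omega(x,y)\le 1/k\}$ is $\mathcal A\otimes\mathcal B(H\times X)$-measurable; the closedness of the graphs follows from the joint continuity provided by Theorem \ref{MainTheorem_b}. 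Hence each $M^k$ is a random multifunction with convex values.

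For item \ref{CorAprRandomMult_a}, the nesting is immediate, and the intersection identity reduces to $\bigcap_k\{\varphi_\omega\le 1/k\}=\{\varphi_\omega=0\}=\gph M_\omega$, using $\varphi_\omega\ge 0$ together with Theorem \ref{MainTheorem_a}. For item \ref{CorAprRandomMult_b}, I would exploit the continuity property \ref{MainTheorem_f}: if $x_n\to x$ in norm, $y_n\rightharpoonup y$ weakly, and $\varphi_\omega(x_n,y_n)\le 1/k$, then $\varphi_\omega(x,y)\le 1/k$, so each $\gph M_\omega^k$ is sequentially closed in the $\|\cdot\|\times w$-topology. Combined with the nestedness of \ref{CorAprRandomMult_a}, any point of $\|\cdot\|\times w$-$\limsup(\gph M_\omega^k)_k$ must lie in $\gph M_\omega^m$ for every $m$, hence in the intersection $\gph M_\omega$.

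The geometric parts \ref{CorAprRandomMult_c} and \ref{CorAprRandomMult_d} rely on the observation that, for a convex $C^\infty$ function, the gradient vanishes only at a global minimizer. For \ref{CorAprRandomMult_c}, pick $y_0\in M_\omega(x)$, so $\varphi_\omega(x,y_0)=0$; by the $C^\infty$-smoothness in Theorem \ref{MainTheorem_b} there exist norm neighborhoods $U$ of $x$ and $V$ of $y_0$ on which $\varphi_\omega<1/k$, hence $V\subset M_\omega^k(x')$ for every $x'\in U$, giving nonempty interior and $\min_y\varphi_\omega(x',y)=0<1/k$. Convexity of $y\mapsto\varphi_\omega(x',y)$ then forces $\nabla_y\varphi_\omega(x',y)\ne 0$ on the level set $\{y:\varphi_\omega(x',y)=1/k\}$, and the regular value theorem produces a $C^\infty$-smooth boundary, so $M_\omega^k(x')$ is a $C^\infty$-convex body. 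For \ref{CorAprRandomMult_d}, the hypothesis $\dom M_\omega=H$ yields $\min_y\varphi_\omega(x,y)=0$ for every $x\in H$; by the same convexity-plus-minimizer argument, $\nabla_y\varphi_\omega(x,y)\ne 0$ whenever $\varphi_\omega(x,y)=1/k>0$, so $\nabla\varphi_\omega(x,y)\ne 0$ on $\bd\gph M_\omega^k=\{\varphi_\omega=1/k\}$, yielding the desired $C^\infty$-smoothness.

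The main delicate point is \ref{CorAprRandomMult_b}: the $\|\cdot\|\times w$-topology on $H\times X$ is not metrizable in general, so one has to read the outer limit sequentially and verify that property \ref{MainTheorem_f} is precisely the form of continuity needed to keep the (non-convex in $(x,y)$) sublevel sets $\gph M_\omega^k$ closed along such mixed-convergent sequences. All remaining items are direct consequences of the smoothness, convexity, and joint continuity features already encoded in $\varphi$ by Theorem \ref{MainTheorem}.
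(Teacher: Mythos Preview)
Your proposal is correct and follows essentially the same approach as the paper: both define $M^k_\omega(x)$ as the sublevel sets $\{\varphi_\omega(x,\cdot)\le\varepsilon_k\}$ of the smooth integrand from Theorem~\ref{MainTheorem} and verify (a)--(d) using, respectively, the nonnegativity and zero-set characterization of $\varphi$, property~\ref{MainTheorem_f}, and the convexity-forces-nonzero-gradient argument on the level sets. The only cosmetic slip is in (c), where you assert $\min_y\varphi_\omega(x',y)=0$ for $x'\in U$ without knowing $x'\in\dom M_\omega$; the paper (and your own subsequent step) only use the weaker fact that some $y'\in V$ satisfies $\varphi_\omega(x',y')<1/k$, which is all that is needed.
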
 
\begin{proof}
By Theorem \ref{MainTheorem}, there is a $C^\infty$ function $\varphi$ such that  for all $\omega \in \Omega$ 
$$ (x,y)\in \gph(M_\omega)\iff \varphi_\omega(x,y)=0.$$ 
Consider $\varepsilon_k \to  0^+$ and the multifunction $M^k (\omega,x) := \{  z\in X: \varphi_\omega(x,z) \leq \varepsilon_k\}$. 
We will prove that $(M^{k})_{k\in \mathbb{N}}$ is the required sequence of multifunctions. \vspace{0.1cm}\newline  
\noindent {\bf Claim~1:} \emph{$M^{k}$ is a random multifunction}\newline
\noindent \emph{Proof of Claim 1}: Due to the continuity of $\varphi_\omega$ and the measurability of $\varphi(\cdot,x,y)$ for all $(x,y)\in H\times X$, it is clear that  $M^k$ is a random multifunction with convex values. Hence, by virtue of the continuity of $\varphi_\omega$, the multifunction $\omega\tto \gph(M^k_\omega)$ is measurable with closed values. Besides, by the convexity  and continuity of  $\varphi_\omega(x,\cdot)$, the set $M^k(\omega,x)$ is closed and convex. \vspace{0.1cm}\newline 
\noindent {\bf Claim~2:} \emph{Assertion $a)$ holds.}\newline
\noindent \emph{Proof of Claim 2}: On the one hand, it is clear that $\gph(M_\omega)\subset \gph(M_\omega^k)$ for all $k\in\N$.  On the other hand, for any $(x,y)\in H\times X$ such that  $\varphi_\omega(x,y)\leq \varepsilon_k$ for all $k\in \mathbb{N}$ we have that $\varphi_\omega(x,y) = 0$. Indeed, since $\varepsilon_k\to 0^+$, we obtain that $\bigcap_{k\in \N } \gph M_\omega ^k = \gph M_\omega$ for all $\omega \in \Omega$. \vspace{0.1cm}\newline
\noindent {\bf Claim~3:} \emph{Assertion $b)$ holds}\newline
\noindent \emph{Proof of Claim 3}: Let $(x,y)\in \|\cdot \|\times w$-$\limsup \gph M_{\omega}^k$. Then, there exists a sequence $(x_{n_k},y_{n_k})\in \gph(M_\omega^{n_k})$, where $(n_k)_{k\in \N}$ is a strictly increasing, such that $x_{n_k}\to x$ and $y_{n_k}\rightharpoonup y$. Hence, since $\varphi$ is a $\|\cdot\|$-weak continuous and $\varphi_\omega(x_{n_k},y_{n_k})\leq \varepsilon_{n_k}$, it follows that $\varphi_\omega(x,y) = 0$. Thus,  $(x,y)\in \gph M_\omega$ and $$\|\cdot \|\times w\text{-}\limsup \gph M_{\omega}^k \subset \gph M_\omega \textrm{ for all }   \omega \in \Omega.$$  
\noindent {\bf Claim~4:} \emph{Assertion $c)$ holds} \newline
\noindent \emph{Proof of Claim 4}: Fix $x\in \text{dom}(M_\omega)$ and $k\in \N$. Then,  there exists $y\in X$ such that $\varphi_\omega(x,y)=0$. By continuity of $\varphi_\omega$, it is possible to find a neighborhood $U\times V$ of $(x,y)$ such that the following implication hold:
$$(x',y')\in U\times V \Rightarrow \varphi_\omega(x',y')<\varepsilon_k.$$ 
Let $x'\in U$. Then, since $V\subset M_\omega^k(x')$,  $M_\omega^k(x')$ is a closed and convex set with nonempty interior. 
Let $z\in \bd M_\omega^k(x') := \{ v\in X:\varphi_\omega(x',v)=\varepsilon_k \}$. Then, by observing that $\varphi_\omega(x',\cdot)$ is a $C^\infty$ and convex function, we obtain that
$$
D_y\varphi_\omega(x',v) = 0 \iff v \textrm{ is a minimum of } \varphi_{\omega}(x',\cdot).
$$
Moreover, $\varphi_\omega(x',y)<\varepsilon_k=\varphi_\omega(x',z)$. Thus,  $D_y\varphi(x',z)\neq 0$, which, by the implicit function theorem, implies that $\bd M_\omega^k(x')$ is a $C^\infty$-manifold. \vspace{0.1cm}
\newline \noindent {\bf Claim~5:} \emph{Assertion $d)$ holds}: \newline
\noindent \emph{Proof of Claim 4}: Suppose that $\text{dom}(M_\omega) = H$. By assumption, it is clear that  the set $\{ (x,y)\in H\times X\colon \varphi_\omega(x,y)<\varepsilon_k \}$ is nonempty. Then,  $\gph M_\omega^k$ has nonempty interior. Moreover,  
$$\operatorname{bd} \gph M_\omega^k\subset \{ (x,y)\in H\times X\colon\varphi_\omega(x,y) = \varepsilon_k \},$$
which implies that any $(x,y)\in \operatorname{bd} \gph M_\omega^k$  satisfies $\varphi_\omega(x,y) = \varepsilon_k$. Even more, there exists $y'\in X$ such that $\varphi_\omega(x,y') = 0$. Hence, if $D \varphi(x,y) = 0$, then $D_y\varphi(x,y) = 0$. Thus,  by convexity,  $y$ is a minimum of function $\varphi_\omega(x,\cdot)$. However, since $\varphi_\omega(x,y')<\varphi_\omega(x,y)$, we obtain a contradiction.  Therefore, $D\varphi_\omega(x,y)\neq 0$ and we can apply the implicit multifunction theorem to obtain that $\operatorname{bd} M_\omega^k(x')$ is a $C^\infty$-manifold.
\end{proof}

The latter result provides a new approximation technique for random multifunctions. Indeed, in the next result, we construct a sequence of approximations whose values are smooth convex bodies. Recall that the notion of essentially uniformly convergence of multifunctions was defined in  \eqref{metric1}.
\begin{corollary}\label{corconvexbody}
	Let $M:\Omega \tto \mathbb{R}^d $ be a measurable multifunction with nonempty closed and convex values. Then, there exists a sequence of measurable multifunction $S_k: \Omega\tto\mathbb{R}^d$ converging essentially uniformly to $M$ and whose values are  $C^\infty$ convex bodies for all  $\omega\in \Omega$.
\end{corollary}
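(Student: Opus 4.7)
The plan is to construct each $S_k(\omega)$ as a sublevel set of the convolution of the convex, $1$-Lipschitz distance function $x\mapsto d(x,M(\omega))$ against a standard $C^\infty$ mollifier, so that convexity is preserved and an explicit uniform error bound is available. Fix a nonnegative radial $C^\infty$ function $\rho:\R^d\to[0,\infty)$ with $\int_{\R^d}\rho=1$ and $\operatorname{supp}\rho\subset \bar B(0,1)$, set $\rho_\varepsilon(y):=\varepsilon^{-d}\rho(y/\varepsilon)$, and define
\[
f^{\varepsilon}_\omega(x):=(\rho_{\varepsilon}\ast d(\cdot,M(\omega)))(x)=\int_{\R^d}\rho_\varepsilon(y)\,d(x-y,M(\omega))\,dy.
\]
Then $f^{\varepsilon}_\omega$ is $C^\infty$, convex (as an average of the convex functions $x\mapsto d(x-y,M(\omega))$), $1$-Lipschitz, and satisfies $|f^{\varepsilon}_\omega(x)-d(x,M(\omega))|\leq\varepsilon$ uniformly in $(\omega,x)$. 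Picking $\varepsilon_k\downarrow 0$, I would set $S_k(\omega):=\{x\in\R^d:f^{\varepsilon_k}_\omega(x)\leq 2\varepsilon_k\}$.

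Verifying that $S_k(\omega)$ is a $C^\infty$ convex body splits into three short points. Convexity of $S_k(\omega)$ is inherited from $f^{\varepsilon_k}_\omega$. Since $M(\omega)$ is nonempty, $f^{\varepsilon_k}_\omega\leq\varepsilon_k<2\varepsilon_k$ on $M(\omega)$, so the strict sublevel set $\{f^{\varepsilon_k}_\omega<2\varepsilon_k\}$ is a nonempty open subset of $S_k(\omega)$, giving nonempty interior. For $C^\infty$ regularity of $\bd S_k(\omega)$ I would use convexity again: each $x\in\bd S_k(\omega)$ satisfies $f^{\varepsilon_k}_\omega(x)=2\varepsilon_k>\min f^{\varepsilon_k}_\omega$ and is therefore not a critical point of the convex $C^\infty$ function $f^{\varepsilon_k}_\omega$, so $\nabla f^{\varepsilon_k}_\omega(x)\neq 0$ and the implicit function theorem produces a $C^\infty$ manifold. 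Measurability of $\omega\mapsto S_k(\omega)$ follows because $(\omega,x)\mapsto d(x,M(\omega))$ is Carath\'eodory, Fubini passes joint measurability to $(\omega,x)\mapsto f^{\varepsilon_k}_\omega(x)$, and $S_k$ is the sublevel multifunction of the normal integrand $f^{\varepsilon_k}_\omega-2\varepsilon_k$.

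Essentially uniform convergence is the immediate consequence of the uniform bound $|f^{\varepsilon_k}_\omega-d(\cdot,M(\omega))|\leq\varepsilon_k$: one reads off $M(\omega)\subset S_k(\omega)\subset\{x:d(x,M(\omega))\leq 3\varepsilon_k\}$, so the Hausdorff distance obeys $d_H(S_k(\omega),M(\omega))\leq 3\varepsilon_k$ for every $\omega$. Since $d_\rho(A,B)\leq d_H(A,B)$ for all $\rho\geq 0$ and $\int_0^{\infty}e^{-\rho}\,d\rho=1$, this gives $\bar d(S_k(\omega),M(\omega))\leq 3\varepsilon_k$ uniformly in $\omega$, so $\|\bar d(S_k,M)\|_\infty\to 0$. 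The main delicate point is the choice of approximant: applying Corollary \ref{mainconvex} and taking $\{\varphi_\omega\leq\varepsilon_k\}$ would already yield $C^\infty$ convex bodies, but because a small value of $\varphi_\omega$ carries no quantitative distance information to $M(\omega)$ the convergence would only be pointwise in $\omega$, whereas convolving the distance function trades the explicit $\varphi_\omega$ for the decisive bound $\|f^{\varepsilon}_\omega-d(\cdot,M(\omega))\|_\infty\leq\varepsilon$ that forces a uniform rate.
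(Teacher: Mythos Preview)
Your proof is correct, and it takes a genuinely different route from the paper's. The paper first applies Corollary~\ref{CorAprRandomMult} (with $H=\{0\}$, $X=\R^d$) to the sublevel sets $\{\varphi_\omega\leq\varepsilon_k\}$ of the integrand produced by Theorem~\ref{MainTheorem}; as you anticipated in your final paragraph, this yields only pointwise convergence $\bar d(M_k(\omega),M(\omega))\to 0$ for each $\omega$. The paper then upgrades this to essentially uniform convergence by a measurable selection argument: it considers the measurable multifunction $J_n(\omega)=\{k:\bar d(M_{\varepsilon_k}(\omega),M(\omega))\leq 1/n\}$, extracts a measurable selection $\lambda_n:\Omega\to\N$, and sets $S_n(\omega)=M_{\varepsilon_{\lambda_n(\omega)}}(\omega)$.

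Your construction bypasses both the abstract representation Theorem~\ref{MainTheorem} and the selection step, replacing them with the mollified distance function and the elementary uniform bound $\|f^{\varepsilon}_\omega-d(\cdot,M(\omega))\|_\infty\leq\varepsilon$. What this buys you is an explicit, $\omega$-independent rate $\bar d(S_k(\omega),M(\omega))\leq 3\varepsilon_k$ (and in fact a Hausdorff bound), and the conclusion holds for \emph{every} $\omega$, matching the statement literally, whereas the paper's own proof only argues ``for a.e.\ $\omega$''. What the paper's approach buys is that the corollary becomes an illustration of the strength of its main theorem; your argument, by contrast, is self-contained and does not need the machinery of Section~\ref{MainSECT} at all.
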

\begin{proof}
Set $X = \R^d$ and $H = \{0\}$ and consider the sequence of random multifunctions $(M_k)_{k}$ provided by Corollary \ref{CorAprRandomMult}. Then, $M_k$ takes $C^\infty$ convex bodies values for a.e. $\omega\in \Omega$ and by assertion $a)$ from Corollary \ref{CorAprRandomMult}, for a.e. $ \omega\in \Omega$, 
\begin{equation}\label{conv-distancia}
\bar{d}(M_{\varepsilon_k}(\omega),M(\omega)) \to 0 \textrm{ as } k\to +\infty.
\end{equation}
where $\bar{d}$ is the integrated set distance defined in \eqref{metric0}. Let us consider the function  $f\colon \Omega \times \N \to [0,+\infty)$ defined by $f(\omega,k)= \bar{d}(M_{\varepsilon_k}(\omega),M(\omega))$. It is clear that  $\omega \to f(\omega ,k)$ is measurable for all $k\in \N$. Thus,  for all $n\in\mathbb{N}$, the multifunction $J_n: \omega\rightrightarrows \{ k\in\N\colon f(\omega,k)\leq 1/n \} $ is measurable. Moreover,   due to \eqref{conv-distancia},  $J_n(\omega)$ is nonempty. Hence, there exists a measurable selection $\lambda_n:\Omega\to \N$ such that
\begin{equation*}
 \bar{d}(M_{\varepsilon_{\lambda_n(\omega)}}(\omega),M(\omega))\leq \frac{1}{n} \textrm{ for all } \omega\in \Omega.
\end{equation*}
Finally, by virtue of \eqref{conv-distancia}, the measurable multifunction $S_n\colon \omega\rightrightarrows M_{\varepsilon_{\lambda_n(\omega)}}(\omega)$ converges essentially uniformly to $M$ with respect to set distance.
 \end{proof}

The next result concerns the approximation of normal integrands by $C^{\infty}$ normal integrands.
\begin{theorem}\label{thmfuncap}
Let $f\colon\Omega \times H \times X \to {\R}$ be a normal integrand. Assume that $f$ is convex with respect to the variable on $X$ and that the conjugate map 
\begin{equation}\label{eqconjfenchel}
    x\mapsto f_\omega^\ast(x ,x^\ast):=\sup_{y\in X}\langle x^\ast,y\rangle-f_\omega(x,y)    
\end{equation}
 is upper semicontinuous for all $\omega\in \Omega$ and $x^\ast\in X^\ast$. Then, there exists a nondecreasing sequence of $C^{\infty}$-normal integrand $f_k:\Omega \times H \times X \to \R$ such that  for all $\omega \in \Omega$
 \begin{equation}\label{eq_supremum}
 \lim\limits_k f_k(\omega,x,z)=\sup\limits_{k}  f_k(\omega,x,z)=f(\omega, x,z) \quad \textrm{ for all } (x,z)\in H\times X.
\end{equation} 
 Moreover, the function $f_k$ are convex with respect to the variable on $X$.
\end{theorem}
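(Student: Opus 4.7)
The plan is to apply Theorem \ref{MainTheorem} to the epigraph-valued random multifunction $E\colon\Omega\times H\tto X\times\R$ defined by $E(\omega,x):=\epi f_\omega(x,\cdot)$, and then recover the approximants $f_k$ by inverting the resulting smooth representation level set by level set. First, $E_\omega(x)$ is nonempty (since $f$ is finite-valued), closed, and convex, and $\gph E_\omega=\epi f_\omega$ is a random closed set because $f$ is a normal integrand. To check that $E_\omega$ is pseudo-norm-weak upper semicontinuous, I would consider $(\tilde y,\nu)\in X^\ast\times\R$ and $\kappa\in\R$ with $E_\omega(x)\subset\{(y,\beta)\colon\langle\tilde y,y\rangle+\nu\beta<\kappa\}$. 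Since the epigraph is unbounded in the $+\beta$ direction one must have $\nu\leq 0$; the case $\nu=0$ is trivial because $\dom f_\omega(x,\cdot)=X$ forces $\tilde y=0$. For $\nu<0$, after rescaling to $\nu=-1$, the inclusion reads $f^\ast_\omega(x,\tilde y)\leq\kappa$, and the standing upper semicontinuity hypothesis on $x\mapsto f^\ast_\omega(x,\tilde y)$ provides a neighborhood of $x$ on which $f^\ast_\omega(x',\tilde y)<\kappa+\eta$, giving the required inclusion for $E_\omega(x')$.

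Theorem \ref{MainTheorem} then furnishes a normal integrand $\varphi\colon\Omega\times H\times X\times\R\to[0,\infty)$, which is $C^\infty$ in $(x,y,\alpha)$, convex in $(y,\alpha)$, and satisfies $\varphi_\omega(x,y,\alpha)=0\iff\alpha\geq f_\omega(x,y)$. For fixed $(\omega,x,y)$ the convex, nonnegative function $\varphi_\omega(x,y,\cdot)$ vanishes on $[f_\omega(x,y),\infty)$ and is positive elsewhere, which by a short convexity argument (a constant plateau cannot connect a positive value to the zero set without violating convexity) forces it to be strictly decreasing wherever positive, so $\partial_\alpha\varphi_\omega(x,y,\alpha)<0$ at every such point. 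I would then define
\[
f_k(\omega,x,y):=\inf\{\alpha\in\R\colon\varphi_\omega(x,y,\alpha)\leq 1/k\}.
\]
Since the sublevel sets $\{\varphi_\omega(x,y,\cdot)\leq 1/k\}$ contain $[f_\omega(x,y),\infty)$ and decrease to it as $k\to\infty$, one obtains $f_k\leq f_{k+1}\leq f$ and $\sup_k f_k=f$. Moreover $\epi f_k(\omega,x,\cdot)=\{(y,\alpha)\colon\varphi_\omega(x,y,\alpha)\leq 1/k\}$ is a convex sublevel set of a convex function, so $f_k(\omega,x,\cdot)$ is convex; and the implicit function theorem applied to $\varphi_\omega(x,y,f_k(\omega,x,y))=1/k$, whose partial derivative in $\alpha$ is nonzero by the strict-decrease property above, yields $f_k\in C^\infty$ in $(x,y)$, with measurability in $\omega$ inherited from $\varphi$.

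The technical heart of the argument is the coercivity $\varphi_\omega(x,y,\alpha)\to+\infty$ as $\alpha\to-\infty$, required both to keep $f_k$ finite and to guarantee the unique solvability of $\varphi_\omega(x,y,\alpha)=1/k$ underpinning the implicit function step. Although not recorded in the statement of Theorem \ref{MainTheorem}, this fact is available from its constructive proof: any Hahn-Banach functional separating a subgraph point $(y,\alpha_0)$ from the epigraph must carry a strictly negative coefficient on the vertical $\alpha$-axis, so after the scaling and summation defining $\varphi$ the corresponding $\theta$-terms produce linear coercive growth in $-\alpha$. With this coercivity granted, the remaining assertions are routine verifications.
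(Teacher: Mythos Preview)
Your approach is essentially the paper's: both pass to the epigraph multifunction $E_\omega(x)=\epi f_\omega(x,\cdot)$, verify pseudo-norm-weak upper semicontinuity via the conjugate hypothesis exactly as you do, apply the main representation theorem, and then read off $f_k$ from the sublevel sets $\{\varphi_\omega\leq\varepsilon_k\}$. The paper routes this through Corollary~\ref{CorAprRandomMult} and then asserts in one line that the resulting smooth-boundary sets $\gph M^k_\omega$ are epigraphs of $C^\infty$ integrands; your implicit-function-theorem argument with $\partial_\alpha\varphi_\omega<0$ is precisely what makes that assertion rigorous.

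One simplification: you need not revisit the construction of $\varphi$ to obtain coercivity in $\alpha$. The restriction $g(\alpha):=\varphi_\omega(x,y,\alpha)$ is convex (as a slice of the jointly convex map from Theorem~\ref{MainTheorem_c}), nonnegative, identically zero on $[f_\omega(x,y),\infty)$, and strictly positive below. If $g$ were bounded on $(-\infty,f_\omega(x,y))$, then for any fixed $\alpha_2<f_\omega(x,y)$ and $\alpha_1\to-\infty$ the convex-combination inequality
\[
g(\alpha_2)\;\leq\;\frac{f_\omega(x,y)-\alpha_2}{\,f_\omega(x,y)-\alpha_1\,}\,g(\alpha_1)\;+\;\frac{\alpha_2-\alpha_1}{\,f_\omega(x,y)-\alpha_1\,}\,g(f_\omega(x,y))\;\longrightarrow\;0
\]
would force $g(\alpha_2)=0$, a contradiction. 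Hence $g(\alpha)\to+\infty$ as $\alpha\to-\infty$, which is all you need for finiteness of $f_k$ and for the implicit-function step.
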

\begin{proof} Let us consider the multifunction $M\colon \Omega \times H \to X\times \R$ defined by
 \begin{equation}\label{defM}
     M_\omega(x):=\{ (y, \alpha)\in X\times \R : f_\omega (x,y) \leq \alpha\}.
  \end{equation}
  We observe that $\gph M_{\omega}=\epi f_{\omega}$ for all $\omega \in \Omega$. \newline 
 \noindent {\bf Claim~1:}  {\em The application $M$ is a random multifunction with closed and convex values. Moreover, for every $\omega \in \Omega$, $x\tto M_\omega(x)$ is pseudo-norm-weak upper semicontinuous.} \newline
\noindent \emph{Proof of Claim 1}:  Since $f$ is a normal integrand, $M$ is a random multifunction with nonempty values.  The convexity of the values of $M_{\omega}(x)$ follows from the convexity of $f$ with respect to the variable on $X$. \newline 
  \noindent To prove the pseudo-norm-weak usc property, take $w^\ast\in X^\ast$ and $\alpha,\gamma\in\R$ such that $M_\omega(x) \subset C_{w^\ast,\alpha,\gamma}:= \{  (u,\beta)\in X\times \R\colon \langle w^\ast , u\rangle + \alpha \beta < \gamma\}$. By the definition of $C_{w^\ast,\alpha,\gamma}$, it follows that $\alpha \leq 0$. \newline \noindent On the one hand, if $\alpha =0$, then  $w^\ast = 0$ and $\gamma >0$. Hence,  $C_{w^\ast,\alpha,\gamma} = X\times \R$. \newline \noindent On the other hand, if $\alpha <0$, then
$$ \langle |\alpha|^{-1}w^\ast, u \rangle - f_\omega(x,u) <\gamma|\alpha|^{-1} \textrm{ for all } u \in X,$$ which implies that $f_\omega^\ast(x,|\alpha|^{-1}w^\ast)\leq \gamma|\alpha|^{-1} $. By hypothesis of upper semicontinuity of the conjugate map \eqref{eqconjfenchel}, for all $\eta >0$, there exists $ \varepsilon >0$ such that $f^\ast(x',|\alpha|^{-1}w^\ast) < (\gamma+\eta)|\alpha|^{-1}  $ for all $x'\in \mathbb{B}_\varepsilon(x)$.  Therefore,  for all $u\in X$  $$ \langle |\alpha|^{-1}w^\ast, u \rangle - f_\omega(x',u) <(\gamma+\eta)|\alpha|^{-1}. $$ 
\noindent Finally, if $(u,\beta)\in M_\omega(x')$ for $x'\in \mathbb{B}_\varepsilon(x)$, then $(u,\beta)\in C_{w^\ast,\alpha,\gamma+\eta}$, which proves that $x\in H\tto M_\omega(x)$ is pseudo-norm-weak upper semicontinuous for all $\omega\in\Omega$.  \vspace{0.1cm} \newline
\noindent {\bf Claim~2:} {\em There exists a sequence of normal integrands $(f^k)_k$ satisfying the statement of theorem.} \newline
\noindent \emph{Proof of Claim 2}: Since $\dom M_\omega = H$ for all $\omega\in \Omega$, we can apply Corollary \ref{CorAprRandomMult} to obtain a sequence of random multifunctions $(M^k)_k$ such that for all $k\in \N$ and a.e $\omega\in \Omega$, the set $\gph M^k_\omega$ has $C^\infty$-smooth boundary and 
  $$\bigcap_{k\in \N } \gph M_\omega ^k = \gph M_\omega \textrm{ for a.e. } \omega \in \Omega.$$
Thus, for all $k\in\N$, there exists a $C^\infty$-normal integrand $f^k \colon \Omega\times H\times X\to\R$ such that $\gph M_w^k = \epi f_\omega^k $ for a.e. $\omega\in \Omega$ and all $k\in \N$. Moreover, for a.e. $\omega\in \Omega$, $f_\omega^{k+1}\leq f_\omega^k\leq f_\omega$ pointwisely. Thus, 
$$\sup_{k\in\N}f_\omega^k(x,y)\leq f_\omega(x,y) \textrm{ for all } (x,y)\in H\times X.$$
Hence, if $(x,y)\in H\times X$, we can take $\alpha:=\sup_{k\in\N}f_\omega^k(x,y)$, which satisfies $(x,y,\alpha)\in \gph M_\omega^k$ for all $k\in\N$. Finally, $(x,y,\alpha)\in \gph M_\omega$, then $f(x,y)\leq \alpha$, which proves \eqref{eq_supremum}.
\end{proof}
The following result gathers some sufficient conditions which allow us to verify the condition \eqref{eqconjfenchel} from Theorem \ref{thmfuncap}.
\begin{proposition} Under the assumptions of Theorem \ref{thmfuncap}, the upper semicontinuity condition \eqref{eqconjfenchel} can be verified in the following cases:
\begin{enumerate}
\item[(a)] For all $\omega\in \Omega$, the function  $f_\omega:H\times X\to \R$ is uniformly continuous.
\item[(b)] The space $X$ is reflexive, for all $\omega \in \Omega$, if $x_n \to x$ and $y_n\rightharpoonup y$, then
$$f_{\omega}(x,y)\leq \liminf  f_{\omega}(x_n,y_n),$$
and  for all $\omega \in \Omega$ and $x\in H$, there exist $\delta>0$, $\alpha>0$ and $\beta\in \R$ such that
 \begin{equation}\label{eqnprop}
       \alpha\|y\|^2+\beta \leq  f_\omega(x',y)   \textrm{ for all } (x',y)\in \B_\delta(x)\times X .
    \end{equation}
\end{enumerate}

\end{proposition}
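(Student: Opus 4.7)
The plan is to treat the two cases separately since they have rather different flavors: case (a) yields continuity of $x\mapsto f_\omega^\ast(x,x^\ast)$ essentially for free, while case (b) relies on weak compactness of minimizing sequences in the Fenchel conjugate.

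For (a), uniform continuity of $f_\omega$ means that for every $\varepsilon>0$ there is $\delta>0$ with $|f_\omega(x,y)-f_\omega(x',y)|<\varepsilon$ whenever $\|x-x'\|<\delta$, uniformly in $y$. Thus, for such $x,x'$,
\begin{equation*}
\langle x^\ast,y\rangle-f_\omega(x',y)\leq \langle x^\ast,y\rangle-f_\omega(x,y)+\varepsilon \quad \text{ for all } y\in X,
\end{equation*}
and taking the supremum over $y$ gives $f_\omega^\ast(x',x^\ast)\leq f_\omega^\ast(x,x^\ast)+\varepsilon$. Reversing the roles of $x$ and $x'$ shows continuity of $x\mapsto f_\omega^\ast(x,x^\ast)$, so in particular upper semicontinuity.

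For (b), fix $\omega\in\Omega$, $x\in H$ and a sequence $x_n\to x$; we want $\limsup_n f_\omega^\ast(x_n,x^\ast)\leq f_\omega^\ast(x,x^\ast)$. Pick $y_n\in X$ with $\langle x^\ast,y_n\rangle-f_\omega(x_n,y_n)\geq f_\omega^\ast(x_n,x^\ast)-1/n$. For $n$ large, $x_n\in\B_\delta(x)$, so the coercivity assumption gives $\alpha\|y_n\|^2+\beta\leq f_\omega(x_n,y_n)$, hence
\begin{equation*}
f_\omega^\ast(x_n,x^\ast)-1/n\leq \|x^\ast\|\cdot\|y_n\|-\alpha\|y_n\|^2-\beta.
\end{equation*}
The right-hand side is bounded above by $\|x^\ast\|^2/(4\alpha)-\beta$, so $(f_\omega^\ast(x_n,x^\ast))_n$ is bounded above and $(y_n)_n$ is bounded whenever $f_\omega^\ast(x_n,x^\ast)$ is bounded below along a subsequence. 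Extract a subsequence (still indexed by $n$) realising $L:=\limsup_n f_\omega^\ast(x_n,x^\ast)$; if $L=-\infty$ there is nothing to prove, otherwise the bound above yields $\sup_n\|y_n\|<\infty$. Since $X$ is reflexive, pass to a further subsequence with $y_n\rightharpoonup y\in X$. Combining $\langle x^\ast,y_n\rangle\to\langle x^\ast,y\rangle$ with the assumed sequential weak lower semicontinuity $f_\omega(x,y)\leq \liminf_n f_\omega(x_n,y_n)$, we obtain
\begin{equation*}
L=\lim_n f_\omega^\ast(x_n,x^\ast)\leq \langle x^\ast,y\rangle-\liminf_n f_\omega(x_n,y_n)\leq \langle x^\ast,y\rangle-f_\omega(x,y)\leq f_\omega^\ast(x,x^\ast),
\end{equation*}
which is the desired upper semicontinuity.

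The only subtle point is the boundedness of the near-maximising sequence $(y_n)$; without the quadratic coercivity \eqref{eqnprop} one could not pass to a weak limit and the whole argument in (b) would collapse. I expect no further difficulty: the $\|\cdot\|\times w$-lower semicontinuity hypothesis is tailored exactly to handle the limit of $f_\omega(x_n,y_n)$ when $x_n\to x$ strongly and $y_n\rightharpoonup y$ weakly, and reflexivity of $X$ together with the quadratic lower bound provides the required weak compactness.
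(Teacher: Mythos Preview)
Your proof is correct and follows essentially the same strategy as the paper's: part (a) is identical, and in part (b) both arguments pick near-maximisers $y_n$, use the quadratic coercivity \eqref{eqnprop} to bound them, extract a weakly convergent subsequence via reflexivity, and invoke the $\|\cdot\|\times w$-lower semicontinuity of $f_\omega$ to pass to the limit. The only cosmetic difference is that the paper phrases (b) as a proof by contradiction and first isolates the finiteness of $f_\omega^\ast(\cdot,x^\ast)$, whereas you argue directly and absorb finiteness into the coercivity bound.
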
 
\begin{proof} 
$(a)$: Fix $\omega\in \Omega$ and let $y^\ast\in X^\ast$, $\varepsilon >0$ and a sequence $(x_n)_n\subset H$ converging to $x$. 
By virtue of the uniform continuity of $f_{\omega}$, there exists $\delta>0$ such that if $\|x-x'\|<\delta$ and  $\|y-y'\|<\delta$ then $|f(x,y)-f(x',y')|<\varepsilon$. Then, since $x_n\to x$, there exists $N\in\N$ such that $\|x_n-x\|<\delta$ for $n\geq N$. Thus, for all $y\in X$ and $n\geq N$, $|f_{\omega}(x_n,y)-f_{\omega}(x,y)|<\varepsilon$.  Hence, for $n\geq N$, $$ \sup_{y\in X} f_{\omega}(x,y)-f_{\omega}(x_n,y)\leq \varepsilon.
$$ 
Therefore, by using the latter inequality, we obtain that for $n\geq N$,
    \begin{equation*}
    \begin{aligned}
         f^\ast_\omega(x_n,y^\ast) &= \sup_{y\in X} \langle y^\ast,y\rangle - f_\omega(x_n,y)\\
        &= \sup_{y\in X} \langle y^\ast,y\rangle -f_\omega(x,y)+f_\omega(x,y)-f_\omega(x_n,y)\\
     &\leq \sup_{y\in X} \langle y^\ast,y\rangle -f_\omega(x,y) + \sup_{y\in X}f_\omega(x,y)-f_\omega(x_n,y)\\
      &\leq  f^\ast_\omega(x,y^\ast) + \varepsilon.
        \end{aligned}
    \end{equation*}
    By taking $n\to\infty$ in the above inequality,  we obtain that $$ \limsup_{n\to\infty} f^\ast_\omega(x_n,y^\ast)\leq f^\ast_\omega(x,y^\ast) + \varepsilon,$$
    which implies \eqref{eqconjfenchel}. \newline \noindent 
$(b)$ Fix $y^\ast\in X^\ast$ and $\omega\in \Omega$. We will prove first that the map $x\mapsto f_\omega^\ast(x,y^\ast)$ takes finite values. Indeed, by virtue of \eqref{eqnprop}, for all $y\in X$
    \begin{equation*}
    \begin{aligned}
       \langle  y^\ast,y\rangle-f_\omega(x,y)&\leq \|y^\ast\|\|y\|-\alpha\|y\|^2-\beta\\
      &= -\alpha\left(\|y\|-\frac{\|y^\ast\|}{2\alpha}\right)^2 - \beta - \frac{\|y^\ast\|^2}{4\alpha^2}\\
      &\leq - \beta - \frac{\|y^\ast\|^2}{4\alpha^2},
\end{aligned}
    \end{equation*}
    which implies that $f^\ast_\omega(x,y^\ast)\leq - \beta - \frac{\|y^\ast\|^2}{4\alpha^2}<+\infty$ for all $x\in H$. \newline \noindent 
    To prove the upper semicontinuity, we proceed by contradiction. Suppose that the map $x\mapsto f_\omega^\ast (x,y^\ast)$ is not upper semicontinuous at $x\in H$. Then, there exist $(x_n)\subset H$ converging to $x$ and $\varepsilon>0$ such that
    \begin{equation*}
    f_\omega^\ast(x,y^\ast)+\varepsilon\leq   \limsup_{n\to\infty}f_\omega^\ast(x_n,y^\ast).
    \end{equation*}
    Thus, up to a subsequence,  $$ f_\omega^\ast(x,y^\ast)+\frac{1}{2}\varepsilon\leq f_\omega^\ast(x_{n},y^\ast) \textrm{ for } n   \textrm{ big enough}.$$
     Let $\delta>0$ such that \eqref{eqnprop} holds. Then, there exists $N\in \N$ such that $x_{n}\in \B_\delta(x)$ for all $n\geq N$. Using the definition of the convex conjugate, we can find  $(y_{n})_{n}\subset X$ such that for all $n$ big enough
     \begin{equation*}
      f_\omega^\ast(x_{n},y^\ast)\leq \frac{\varepsilon}{4} + \langle y^\ast,y_{n}\rangle-f_\omega(x_{n},y_{n}).
    \end{equation*}
    Therefore, for $n$ large enough
    \begin{equation}\label{eqnineq1} 
    f_\omega^\ast(x,y^\ast)+f_\omega(x_{n},y_{n})+\frac{\varepsilon}{4} \leq \langle y^\ast,y_{n}\rangle.
    \end{equation}
    Moreover, since for all $n$ large enough
    $$\langle y^\ast,y_{n}\rangle-f_\omega(x_{n},y_{n})\leq -\alpha\left(\|y_{n}\|-\frac{\|y^\ast\|}{2\alpha}\right)^2 - \beta - \frac{\|y^\ast\|^2}{4\alpha^2},$$
    the sequence $(y_{n})$ is bounded. Thus, without loss of generality, we can assume that $y_{n}$ weakly converges to some $y\in X$. Then, by assumption,  $ f_\omega(x,y)\leq \liminf f_\omega(x_{n},y_{n})$ and $\langle y^\ast,y_{n}\rangle\to \langle y^\ast,y\rangle$. Finally, by taking limit in \eqref{eqnineq1}, we obtain a contradiction with the definition of convex conjugate.
    \end{proof}

To prove our next result, we need to extend the notion of prox-bounded function to the framework of normal integrands (see, e.g., \cite[Definition 1.23]{MR1491362}).  
A normal integrand  $f\colon \Omega \times  \R^d \to \Rex$ is said to be  \emph{prox-bounded} if there exists a measurable function $\lambda : \Omega \to (0,+\infty) $ such that for all $\omega \in \Omega$  there exists $x\in \R^d$  satisfying $$e_{\lambda(\omega)}f_\omega (x):=\inf_{z\in \R^d}\{ f_\omega(z) + \frac{1}{2\lambda(\omega)  }  \| x-z\|^2\}>-\infty.$$

Our next result is a functional counterpart of Corollary \ref{corconvexbody} and  can also be seen as an  extension of Theorem  \ref{thmfuncap} for extended real-valued functions defined on finite-dimensional spaces. 

\begin{corollary}
	Let $f:\Omega \times \mathbb{R}^d \to \Rex$ be a prox-bounded normal integrand. Then, there exists a sequence $(f^k)_k$ of $C^\infty$-normal integrands $f_k \leq f$   converging epigraphically essentially uniformly to $f$.
\end{corollary}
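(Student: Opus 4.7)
The plan is to regularize $f$ via the Moreau envelope to a real-valued normal integrand, apply the multifunction approximation of Corollary~\ref{CorAprRandomMult} to the epigraphical multifunction, and then extract an essentially uniformly convergent subsequence by a measurable-selection argument patterned on the proof of Corollary~\ref{corconvexbody}.

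By prox-boundedness, for a measurable function $\lambda\colon\Omega\to(0,+\infty)$ and $n\in\N$, the Moreau envelope $f^n_\omega:=e_{\lambda(\omega)/n}f_\omega$ is a real-valued continuous normal integrand with $f^n\le f$ and $f^n_\omega$ epi-converging to $f_\omega$ as $n\to\infty$. Reducing to this real-valued setting is essential: the epigraphical multifunction $\bar M_\omega(x):=[f_\omega(x),+\infty)$ fails to be pseudo-norm-weak upper semicontinuous at points where $f_\omega=+\infty$ but $f_\omega<+\infty$ arbitrarily close — a defect that disappears when $f$ is finite-valued.

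For each $n$, consider $\bar M^n\colon\Omega\times\R^d\tto\R$ defined by $\bar M^n(\omega,x):=[f^n_\omega(x),+\infty)$. This is a random multifunction with convex values whose pseudo-norm-weak upper semicontinuity follows from the lower semicontinuity of $f^n_\omega$ (analogously to Claim~1 of Theorem~\ref{thmfuncap}). Applying Corollary~\ref{CorAprRandomMult} produces a $C^\infty$ normal integrand $\varphi^n\colon\Omega\times\R^d\times\R\to[0,+\infty)$, convex in $\alpha$, with $\{\varphi^n_\omega(x,\alpha)=0\}=\epi f^n_\omega$, together with the sublevel approximations $M^{k,n}_\omega(x):=\{\alpha\in\R:\varphi^n_\omega(x,\alpha)\le\varepsilon_k\}$. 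Because the Hahn--Banach separating functionals used in Claim~2 of Theorem~\ref{MainTheorem} for points below the half-line $[f^n_\omega(x),+\infty)$ must all satisfy $y^*_k>0$, the corresponding summands $\theta^2_k$ in the explicit series defining $\varphi^n$ grow linearly as $\alpha\to-\infty$, providing coercivity of $\varphi^n_\omega(x,\cdot)$ downward in $\alpha$. Hence $M^{k,n}_\omega(x)=[g^{k,n}_\omega(x),+\infty)$ for some real-valued $g^{k,n}_\omega\le f^n_\omega\le f_\omega$, and part~(c) of Corollary~\ref{CorAprRandomMult} combined with the implicit function theorem yields $g^{k,n}_\omega\in C^\infty$.

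For each $\omega$, diagonalizing yields $g^{k_n,n}_\omega$ epi-converging to $f_\omega$, which in finite dimension is equivalent to $\bar{d}(\epi g^{k_n,n}_\omega,\epi f_\omega)\to 0$ pointwise. Following verbatim the measurable-selection argument of Corollary~\ref{corconvexbody}, one extracts a measurable pair $(k_m(\omega),n_m(\omega))$ with $\bar{d}(\epi g^{k_m(\omega),n_m(\omega)}_\omega,\epi f_\omega)\le 1/m$ for all $\omega$; the normal integrands $\tilde f^m(\omega,x):=g^{k_m(\omega),n_m(\omega)}(\omega,x)$ are then $C^\infty$ with $\tilde f^m\le f$ and satisfy $\|\bar{d}(\epi\tilde f^m_\omega,\epi f_\omega)\|_\infty\le 1/m$. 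The main obstacle is the coercivity claim: verifying that $\varphi^n_\omega(x,\alpha)\to+\infty$ as $\alpha\to-\infty$, which is the linchpin guaranteeing that the sublevel sets are genuine epigraphs rather than all of $\R$, and which requires a careful inspection of the specific structure of the $\varphi$ produced in Theorem~\ref{MainTheorem} rather than following from its abstract properties alone.
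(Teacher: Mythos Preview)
Your approach is essentially the paper's: Moreau-envelope regularization to a finite-valued integrand, epigraphical approximation via Corollary~\ref{CorAprRandomMult}, and a measurable-selection extraction in the style of Corollary~\ref{corconvexbody}. The paper packages the middle step as an invocation of Theorem~\ref{thmfuncap} (with $H=\R^d$, $X=\{0\}$) and carries out the two selections sequentially (first $\lambda_n(\omega)$, then the approximation index), whereas you inline the content of Theorem~\ref{thmfuncap} and do a single double-index selection at the end; these are organizational rather than mathematical differences.

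Your attention to the coercivity of $\varphi^n_\omega(x,\cdot)$ as $\alpha\to-\infty$ is well placed---the paper's proof of Theorem~\ref{thmfuncap} simply asserts that ``there exists a $C^\infty$-normal integrand $f^k$ such that $\gph M^k_\omega=\epi f^k_\omega$'' without saying why the sublevel sets $\{\alpha:\varphi_\omega(x,\alpha)\le\varepsilon_k\}$ are half-lines rather than all of~$\R$. Your argument via the sign of the separating functionals from Claim~2 of Theorem~\ref{MainTheorem} is exactly what closes this: since $\bar M^n_\omega(x)=[f^n_\omega(x),+\infty)$ is unbounded above, every admissible $y^*_k$ must be strictly positive, hence $z^*_k<0$, and the corresponding summand $\theta^1_k(\omega,x)\cdot\theta^2_k(\omega,\alpha)$ grows linearly as $\alpha\to-\infty$ whenever $x$ lies in the associated ball $A_k(\omega)$. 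For each fixed $x$ there is at least one such index $k$, so the coercivity holds and the obstacle you flag is not serious.

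One small correction: for the $C^\infty$-smoothness of $x\mapsto g^{k,n}_\omega(x)$ you should appeal to part~(d) of Corollary~\ref{CorAprRandomMult} (smoothness of $\operatorname{bd}\gph M^k_\omega$ under $\dom M_\omega=H$), not part~(c), which only gives that each fiber $M^k_\omega(x')$ is a $C^\infty$-convex body---a vacuous statement in dimension one. It is the proof of~(d) that supplies $D_\alpha\varphi^n_\omega\neq 0$ on the level set, which is what the implicit function theorem needs to deliver smoothness in~$x$.
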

\begin{proof}
By prox-boundedness of $f$ and \cite[Theorem~1.25]{MR1491362}, there exists a function $\lambda_f : \Omega \to (0,+\infty) $ such that for a.e.  $\omega \in \Omega$ and all $\lambda \in (0, \lambda_f(\omega))$ the function 
  \begin{equation*}
     \MoreauYosida{f_\omega}{\lambda }(x):=\inf_{y\in \R^d}    f_\omega (y) + \frac{1}{ 2\lambda} \| x- y\|^2,
  \end{equation*}
  is finite valued and continuous.  Thus, for all $n\in\N$, the multifunction $$M_n: \omega\tto \{ \lambda>0: \bar{d}(\epi f_\omega,\epi \MoreauYosida{f_\omega}{\lambda })\leq 1/n\},$$  is measurable. Then, by virtue of Proposition \ref{graph:measurable:selection:theorem},  we can find a measurable function $\lambda_n :\Omega \to\R $ with $\lambda_n (\omega) \in (0,\lambda_f(\omega))$ and such that 
 $$ \bar{d}( \epi f_\omega,\epi \MoreauYosida{f_\omega}{\lambda_n(\omega) }) \leq 1/n \textrm{ for all } \omega \in \Omega.$$
 Define $g^n_\omega := \MoreauYosida{f_\omega}{\lambda_n(\omega)}$. By virtue of Theorem \ref{thmfuncap} (applied on $H=\R^d$ and  $X=\{0\}$), we can find a sequence $(g_k^n)_{k}$ of $C^\infty$-normal integrands such that $g^n_{k}(\omega,\cdot)\leq g^n_{k+1}(\omega,\cdot)\leq g^n(\omega,\cdot)$ for a.e. $\omega \in \Omega$ and 
 \begin{equation*}
 \sup\limits_{k\in \N} g_k^n(\omega,x) =g^n(\omega, x) \textrm{ for all } x\in H.
\end{equation*}
 By similar arguments to the given in the proof of Corollary \ref{corconvexbody}, it is possible to find $\hat{g}^n$ such that 
 $$ \bar{d}(\epi \hat{g}^n(\omega,\cdot) ,\epi g^n(\omega,\cdot))\leq 1/n  \textrm{ for a.e } \omega \in \Omega.$$
 Thus, if we set $f^n_\omega := \hat{g}^n(\omega,\cdot)$, then for a.e. $\omega \in \Omega$
 \begin{equation*}
   \bar{d}(\epi f^n_{\omega},\epi f_{\omega}) \leq \bar{d}(\epi f^n_{\omega},\epi g^n_{\omega}) + \bar{d}(\epi g^n_{\omega},\epi f_{\omega})\leq   \frac{1}{n}+\frac{1}{n} = \frac{2}{n},
 \end{equation*}
 which implies that $(f^n)_n$ converges epigraphically essentially uniform to $f$.
 Finally, $(f^{n})_n$ is the desired sequence.
	\end{proof}


\section{Integrable selections as minimizers of integral funcionals}\label{integrableselect}
In this section, we apply Theorem \ref{MainTheorem} to show that the sets of $p$-integrable  selections of measurable multifunctions  can be represented  as the set of minimizers of a  $C^\infty$-convex integral functional. 

The first result of this section is devoted to the case of measurable multifunction with nonempty,  closed and convex values. 
\begin{theorem}\label{mainconvex2} Assume that $(\Omega, \A,\mu)$ is a finite measure space. 
	Let $M: \Omega \tto X$ be a measurable multifunction with nonempty, closed and convex values such that the set of $p$-integrable selection $S_M^p$ is nonempty. Then, there exists a $C^\infty$-convex  normal integrand $\varphi \colon \Omega \times X\to [ 0, +\infty)$  such that 
	\begin{equation}\label{eqmainconvex2}
	S_M^p = \left\{  x\in L^p(\Omega ,X) \colon \IntfLp{\varphi}(x):=\int_{\omega}\varphi_{\omega}(x(\omega))\dmu=0    \right\},
	\end{equation}
	Moreover, for $p\in (1,+\infty]$, the integral functional $\IntfLp{\varphi}$ is $C^\infty$ with $k$-derivative 
	\begin{equation}\label{derivativeconvex2}
	D^k\IntfLp{\varphi}(x)= \int_\Omega D^k \varphi_\omega( x(\omega))\dmu \textrm{ for } x\in L^p(\Omega,X),
	\end{equation}
	where the integral in the right-hand side of \eqref{derivativeconvex2} is in the sense of Gelfand.
\end{theorem}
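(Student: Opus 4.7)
The plan is to deduce Theorem~\ref{mainconvex2} essentially as a corollary of Corollary~\ref{mainconvex}, turning the pointwise representation of $M(\omega)$ into a representation of $S_M^p$ by integrating. First I would apply Corollary~\ref{mainconvex} to $M$ to obtain a $C^{\infty}$-convex normal integrand $\varphi:\Omega\times X\to[0,+\infty)$ with $M(\omega)=\{x\in X:\varphi_{\omega}(x)=0\}$ together with the crucial uniform bounds $\kappa_{k}:=\sup_{(\omega,x)}\|D^{k}\varphi_{\omega}(x)\|<+\infty$ for every $k\in\N^{\ast}$. All the work of the proof will then consist in propagating these pointwise properties to the integral functional $\IntfLp{\varphi}$.

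Next I would verify the set-theoretic identity \eqref{eqmainconvex2}. Well-definedness of $\IntfLp{\varphi}$ on $L^{p}(\Omega,X)$ is the first obstacle: fix any $\bar{x}\in S_{M}^{p}$ (nonempty by hypothesis) so that $\varphi_{\omega}(\bar{x}(\omega))=0$ for every $\omega$; the mean-value inequality combined with $\|D\varphi_{\omega}\|\leq\kappa_{1}$ yields
\begin{equation*}
0\leq \varphi_{\omega}(x(\omega))=\varphi_{\omega}(x(\omega))-\varphi_{\omega}(\bar{x}(\omega))\leq \kappa_{1}\|x(\omega)-\bar{x}(\omega)\|,
\end{equation*}
which is in $L^{1}(\Omega)$ because $\mu$ is finite and $x,\bar{x}\in L^{p}$. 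Consequently $\IntfLp{\varphi}(x)\in[0,+\infty)$, and since $\varphi\geq 0$ we have $\IntfLp{\varphi}(x)=0$ iff $\varphi_{\omega}(x(\omega))=0$ a.e., i.e.\ iff $x(\omega)\in M(\omega)$ a.e., which is exactly the condition $x\in S_{M}^{p}$.

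For the smoothness claim, fix $p\in(1,+\infty]$ and proceed by induction on $k$. The $k$-linear map $T_{k}(x):(L^{p})^{k}\to\R$ defined by
\begin{equation*}
T_{k}(x)[h_{1},\ldots,h_{k}]:=\int_{\Omega}D^{k}\varphi_{\omega}(x(\omega))[h_{1}(\omega),\ldots,h_{k}(\omega)]\,d\mu(\omega)
\end{equation*}
is well defined: the integrand is pointwise bounded by $\kappa_{k}\prod_{i}\|h_{i}(\omega)\|$, and H\"older's inequality together with the finiteness of $\mu$ (taking exponents $q_{i}=p$ when $p=\infty$ and using $L^{p}\hookrightarrow L^{k}$ when $p\geq k$) produces a bound of the form $C_{k}\prod_{i}\|h_{i}\|_{L^{p}}$; this identifies $T_{k}(x)$ as an element of $\mathcal{L}^{k}(L^{p};\R)$ representable by the Gelfand integral of $\omega\mapsto D^{k}\varphi_{\omega}(x(\omega))$. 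To pass from the existence of $T_{k}(x)$ to the identity $D^{k}\IntfLp{\varphi}(x)=T_{k}(x)$, I would use Taylor's expansion of $\varphi_{\omega}$ with integral remainder:
\begin{equation*}
\varphi_{\omega}(x+h)=\sum_{j=0}^{k-1}\tfrac{1}{j!}D^{j}\varphi_{\omega}(x)[h]^{j}+\int_{0}^{1}\tfrac{(1-t)^{k-1}}{(k-1)!}D^{k}\varphi_{\omega}(x+th)[h]^{k}\,dt,
\end{equation*}
integrate both sides against $\mu$, and invoke Fubini plus dominated convergence (with the dominating constant $\kappa_{k}$) to obtain the analogous Taylor expansion for $\IntfLp{\varphi}$, which identifies $T_{k}(x)$ as the $k$-th Fr\'echet derivative and shows continuity of $x\mapsto T_{k}(x)$ using continuity of $D^{k}\varphi_{\omega}$ and dominated convergence once more.

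The main obstacle is the last paragraph: making the derivative exchange rigorous on $L^{p}$ with $p$ possibly much smaller than $k$. The essential lever is that Corollary~\ref{mainconvex} gives uniform-in-$(\omega,x)$ bounds on all derivatives, which together with the finiteness of $\mu$ lets the dominated convergence argument absorb the dependence on $x$, and the Gelfand interpretation of the integral bypasses the need for Bochner integrability of $\omega\mapsto D^{k}\varphi_{\omega}(x(\omega))$ in the (possibly non-separable) space $\mathcal{L}^{k}(X;\R)$. Everything else (convexity, non-negativity, the set identity) is immediate from the pointwise properties of $\varphi$.
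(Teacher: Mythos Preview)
Your overall strategy coincides with the paper's: invoke Corollary~\ref{mainconvex} to produce the integrand $\varphi$ with uniformly bounded derivatives, deduce finiteness of $\IntfLp{\varphi}$ via the mean-value inequality against a fixed selection $\bar x\in S_M^p$, read off \eqref{eqmainconvex2} from non-negativity, and then argue the derivative formula \eqref{derivativeconvex2} by induction on $k$. Up to the point where you begin the differentiability proof, your argument and the paper's are essentially identical.

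The gap is precisely the ``main obstacle'' you flag but do not close. Your Taylor-plus-dominated-convergence scheme produces a remainder controlled pointwise by $\kappa_k\|h(\omega)\|^{k}$ (or, at the inductive step, by $\kappa_{k+2}\|h(\omega)\|^{2}$), and you then need $\int_\Omega \|h(\omega)\|^{k}\,d\mu=o(\|h\|_p^{k-1})$, respectively $\int_\Omega\|h(\omega)\|^2\,d\mu=o(\|h\|_p)$. For $p<k$ (resp.\ $p<2$) these integrals need not even be finite, and no amount of dominated convergence repairs that: the dominating function you would need is $\|h\|^{k}$ itself. Your H\"older step for the boundedness of $T_k$ has the same defect---you only justify it for $p\ge k$ or $p=\infty$, and say nothing for $1<p<k$.

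The paper supplies the missing device: a truncation/splitting argument. At the inductive step one takes a sequence $h_j\to 0$ in $L^p$, fixes $\varepsilon>0$, and splits $\Omega$ into $A_j:=\{\omega:\|h_j(\omega)\|>\varepsilon\}$ and its complement. On $A_j^c$ the quadratic Taylor bound $C_{k+2}\|h_j(\omega)\|^{2}\le C_{k+2}\,\varepsilon\,\|h_j(\omega)\|$ reduces the estimate to a first-order quantity, hence to $C_{k+2}\,\varepsilon\,\mu(\Omega)^{(p-1)/p}\|h_j\|_p$. On $A_j$ one abandons Taylor and uses instead the Lipschitz bound for $D^{k}\varphi_\omega$ (constant $C_{k+1}$), obtaining a term $\le 2C_{k+1}\|h_j\|_p\,\mu(A_j)^{(p-1)/p}$; Chebyshev gives $\mu(A_j)\le\varepsilon^{-p}\|h_j\|_p^{p}\to 0$, so for $j$ large $\mu(A_j)^{(p-1)/p}\le\varepsilon$. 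Combining the two pieces yields $\beta_j/\|h_j\|_p\le(2C_{k+1}+C_{k+2}\mu(\Omega)^{(p-1)/p})\varepsilon$, which is the Fr\'echet estimate. This truncation is exactly what lets the uniform derivative bounds compensate for the lack of higher integrability of $h$, and it is the concrete idea your sketch is missing.
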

\begin{proof}
Consider  the integrand function $\varphi$ associated to $M$ given by Theorem  \ref{mainconvex}. Let $x_0 \in S_M^p$. Then, by virtue of \eqref{eq00main}, 
$$ 0\leq \IntfLp{\varphi}(x) \leq 	\kappa   \| x -x_0\|_p, \text{ for all }x\in L^p (\Omega,X), $$
 where $\kappa:= \mu(\Omega)^{\frac{p}{p-1}}\cdot\sup_{(\omega,x)\in \Omega \times X}  \| D\varphi_\omega( x)\|$. Hence, the functional $\IntfLp{\varphi}$ is finite over  $L^p (\Omega,X)$. Equality \eqref{eqmainconvex2} can be easily verified from the properties of $\varphi$.\newline
 Next, we proceed to prove that $\IntfLp{\varphi}$ is $C^\infty$ and satisfies \eqref{derivativeconvex2}.  Indeed, by  the measurability of   $\varphi$, the integral in the right-hand side of \eqref{derivativeconvex2} is well-defined. Moreover, by Taylor's formula, for all $(\omega,x)\in \Omega \times X$ 
  \begin{align}\label{taylor}
  \| D^k\varphi_\omega( x+h) - D^k\varphi_\omega(x) - D^{k+1}\varphi_\omega(x) (h) \| \leq C_{k+2}\| h\|^2,
  \end{align}
  where $C_k:=\sup_{(\omega,x)\in \Omega\times X} \| D^k\varphi_{\omega}(x)\|$.
  Now,  assume that \eqref{derivativeconvex2} holds for $k \in \N$.  Fix $\varepsilon >0 $ and consider a sequence of function $(h_j) \subset L^p(\Omega, X)$ converging to $0$. Then, we can find $j_0 \in \N$ such that for $j\geq j_0$
  \begin{equation*}
  \begin{aligned}
  \mu( A_j )&\leq \varepsilon^{\frac{p}{p-1}} &\textrm{ if }p\in(1,+\infty),\\
  \mu( A_j )& =0 &\textrm{ if }p=\infty, 
  \end{aligned}
  \end{equation*}
  where, $A_j :=\{ \omega \in \Omega :  \| h_j(\omega)\| >\varepsilon\}$.  Now, let us consider the quantities
  \begin{equation*}
  \begin{aligned}
  \beta_j& :=  \left\| D^k\IntfLp{\varphi}(x + h_j) -D^k\IntfLp{\varphi}(x)   - \int_\Omega  D^{k+1}\varphi_\omega(x(\omega)) (h_j(\omega)) \dmu \right\|,\\
  \beta^1_j& := \left\| \int_{A_j}T_{j}^{k}(\omega)\dmu \right\|,\\
  \beta^2_j&:=    \left\| \int_{A^c_j}  T_{j}^{k}(\omega)  \dmu \right\|,
  \end{aligned}
  \end{equation*} 
  where 
  $$
  T_{j}^{k}(\omega):=D^k\varphi_\omega( x(\omega)+h_j(\omega)) - D^k\varphi_\omega(x(\omega)) - D^{k+1}\varphi_\omega(x(\omega)) (h_j(\omega)).
  $$
\emph{Estimation of $\beta_j^1$:} It is clear that  $\beta_j^1=0$ for $p =\infty$. Thus, we focus on the case $p\in (1,+\infty)$. By using \eqref{eq00main}, we get
  \begin{equation*}
  \begin{aligned}
  \beta^1_j  \leq& \left\| \int_{A_j}  \left(   D^k\varphi_\omega( x(\omega)+h_j(\omega)) - D^k\varphi_\omega( x(\omega)) \right)   \dmu \right\| \\&+ \left\| \int_{A_j}  D^{k+1}\varphi_\omega( x(\omega))(h_j(\omega))  \dmu \right\|\\
  \leq &C_{k+1}  \int_{A_j}  \| h_j(\omega) \|\dmu  +  C_{k+1}   \int_{A_j}  \| h_j(\omega) \|\dmu \\
  \leq & 2C_{k+1} \| h_j\|_p  \mu(A_j)^{\frac{p-1}{p}}\leq 2C_{k+1} \| h_j\|_p \cdot \varepsilon.
  \end{aligned}
  \end{equation*}
  \emph{Estimation of $\beta_j^1$:} By using \eqref{taylor}, we obtain that
  \begin{equation*}
  \beta_j^2\leq C_{k+2} \mu(\Omega)^{\frac{p-1}{p}} \| h_j\|_p \cdot \varepsilon.
  \end{equation*}
  Hence, by using the latter estimations,
  \begin{equation*}
  \frac{\beta_j}{\| h_j\|_p }  \leq \left(  2C_{k+1}   + C_{k+2} \mu(\Omega)^{\frac{p-1}{p}} \right)\cdot  \varepsilon \, \textrm{ for all } j \geq j_0.
  \end{equation*}
  Therefore, we conclude that
  \begin{align*}
\frac{1}{\| h_j\|_p} \left( D^k\IntfLp{\varphi}(x + h_j) -D^k\IntfLp{\varphi}(x )   - \int_\Omega  D^{k+1}\varphi_\omega(x(\omega)) (h_j(\omega)) \dmu \right) \to0,
  \end{align*} as $j\to +\infty$.  Consequently, \eqref{derivativeconvex2} holds.
	\end{proof}
\begin{remark}
Differentiability and subdifferentibility of integral functions has been studied by several authors (see, e.g., \cite{MR4261271,MR3947674,MR4062793,MR3783778,Mordukhovich2021,MR310612,MR236689,MR3639281,MR1618939} and the references therein). It can be shown that the integral function $\IntfLp{\varphi}$, where $\varphi$ is the function obtained from Theorem \ref{mainconvex}, is continuously differentiable over $L^1(\Omega, X)$ and its derivative has the following integral representation
	\begin{equation*}
	D\IntfLp{\varphi}(x)=\int_\Omega D\varphi_\omega(x(\omega))\dmu \textrm{ for } x\in  L^1(\Omega, X).
	\end{equation*}
	Nevertheless, in \cite[Theorem 4.7]{MR2817485}, it is shown that differentiability of integral functionals over $L^1(\Omega, \R^d)$ is related to the convexity of the integrand. Thus, we cannot expect high-order differentiability of the integral $\IntfLp{\varphi}$ over $L^1(\Omega, \R^d)$ together with an integral representation for its $k$-derivatives unless 	$D\varphi_\omega $  is  linear. Indeed, we observe that if $\IntfLp{\varphi}$ is  $C^2$, then for any $\lambda^\ast\in L^\infty(\Omega, \R^d)$ the integral function
	$$ x    \to  \langle \lambda^\ast, 	D\IntfLp{\varphi}(x) \rangle = \int_\Omega \langle \lambda^\ast(\omega), D\varphi_\omega(\omega) \rangle \dmu, $$
	is $C^1$ over $L^1(\Omega, \R^d)$ and by virtue of \cite[Theorem~4.7]{MR2817485}, it must be convex, which is not true for general convex normal integrands.
	\end{remark}
We end this section, by showing that for a general measurable multifunction the convex closure of the set of $p$-integrable measurable selections is equal to the set of minimizers of a convex integral functional, which, for $p\in [1,+\infty)$,  is $C^\infty$.  
\begin{theorem} Let $(\Omega, \A,\mu)$ be a finite measure space. 
 Let  $M: \Omega \tto X$ be a measurable multifunction  with nonempty and closed values such that for $p\in [1,+\infty)$ the set of $p$-integrable selections $S_M^p$ is nonempty. Then, there exists a $C^\infty$ convex  normal integrand function  $\varphi :\Omega \times X\to [ 0, +\infty)$  such that 
 \begin{align}\label{eqmainconvex0}
 \cl\co	S_M^p = \left\{  x\in L^p(\Omega ,X) \colon \IntfLp{\varphi}(x)=0    \right\},
 \end{align}
 where $\cl\co S_M^p$ denotes the closed convex hull of $S_M^p$.  In addition, if   $(\Omega,\mathcal{A},\mu)$ is non-atomic, then 
\begin{align}\label{eqmainconvex00}
	\cl^{w}\left(	S_M^p \right) = \left\{  x\in L^p(\Omega ,X) \colon \IntfLp{\varphi}(x)=0    \right\},
\end{align}
where $\cl^{w}$ denotes the closure with respect to the weak topology on $L^p(\Omega, X)$.
 Moreover, in both cases, it is possible to choose $\varphi$ satisfying the estimations \eqref{eq00main}.
	\end{theorem}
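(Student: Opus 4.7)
The plan is to reduce the problem to Theorem~\ref{mainconvex2} by passing to the convexification $\widetilde M(\omega) := \cl\co M(\omega)$, and then to identify the $L^p$-selections of $\widetilde M$ with the closed convex hull (resp.\ the weak closure) of $S_M^p$.

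First I would verify that $\widetilde M$ is a measurable multifunction with nonempty, closed and convex values. Starting from a Castaing representation $(f_k)_k$ of $M$, the identity $\widetilde M(\omega)=\cl\co\{f_k(\omega)\}_k$ reduces this to the measurability of the sequence of finite convex hulls $\omega\mapsto \co\{f_1(\omega),\ldots,f_n(\omega)\}$, which is clear since each such map is the continuous image of $(f_1,\ldots,f_n)$; non-emptiness of $S^p_{\widetilde M}$ is immediate from $S_M^p\subseteq S^p_{\widetilde M}$. Applying Corollary~\ref{mainconvex} to $\widetilde M$ yields a $C^\infty$ convex normal integrand $\varphi:\Omega\times X\to[0,+\infty)$ satisfying the uniform derivative bounds \eqref{eq00main} and $\widetilde M(\omega)=\{y\in X:\varphi_\omega(y)=0\}$. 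Since $\varphi\geq 0$, this pointwise identification lifts to
$$\{x\in L^p(\Omega,X):\IntfLp{\varphi}(x)=0\}=S^p_{\widetilde M}.$$

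The core task is then the two set identities. The inclusion $\cl\co S_M^p\subseteq S^p_{\widetilde M}$ is routine: $S^p_{\widetilde M}$ is convex by the pointwise convexity of $\widetilde M$ and is $L^p$-closed (by pointwise closedness of $\widetilde M$ together with a.e.\ convergence of a subsequence of any $L^p$-convergent sequence), and it already contains $S_M^p$. For the reverse inclusion $S^p_{\widetilde M}\subseteq\cl\co S_M^p$, given $x\in S^p_{\widetilde M}$, I would approximate $x$ in $L^p$-norm by finite convex combinations of measurable selections of $M$: for each $k\in\N$, using the Castaing representation of $M$ together with a measurable-selection / partition-of-$\Omega$ argument, construct measurable coefficients $\lambda_1^k,\ldots,\lambda_{n_k}^k\geq 0$ with $\sum_j\lambda_j^k=1$ and selections $g_j^k\in S_M^p$ satisfying $\bigl\|x(\omega)-\sum_j\lambda_j^k(\omega)g_j^k(\omega)\bigr\|\leq 1/k$ for a.e.\ $\omega$, then pass to the $L^p$-limit using the finiteness of $\mu$ and an integrable majorant obtained from $x$ and a fixed element of $S_M^p$. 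This is the classical Hiai--Umegaki identity $S^p_{\cl\co M}=\cl\co S_M^p$.

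For the non-atomic part, I would rely on the fact that $S_M^p$ is a decomposable subset of $L^p(\Omega,X)$ and that, on a non-atomic measure space, the weak closure of a decomposable set in $L^p$ coincides with its closed convex hull; this is essentially the Hiai--Umegaki/Olech characterization, obtained by combining Mazur's theorem with a Lyapunov convexity argument that converts convex combinations of decomposable elements into pointwise selections via measurable partitioning of $\Omega$. Hence $\cl^{w}S_M^p=\cl\co S_M^p=S^p_{\widetilde M}$, which gives \eqref{eqmainconvex00}. The main obstacle I anticipate lies precisely in this last step and in the measurable construction yielding $S^p_{\widetilde M}\subseteq\cl\co S_M^p$; once these classical selection identities are in place, the smooth integrand $\varphi$ with the required bounds follows at once from Corollary~\ref{mainconvex}.
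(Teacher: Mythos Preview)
Your proposal is correct and follows essentially the same route as the paper: pass to $\widetilde M=\cl\co M$, apply Corollary~\ref{mainconvex} to obtain $\varphi$ with the bounds \eqref{eq00main}, and then invoke the classical selection identities $\cl\co S_M^p=S^p_{\cl\co M}$ and, in the non-atomic case, $\cl^{w}S_M^p=S^p_{\cl\co M}$. The only difference is that the paper cites these identities directly from the literature (\cite[Proposition~2.26]{MR1485775}, \cite[Propositions~6.4.17 and~6.4.19]{MR2527754}) rather than sketching the Castaing/Hiai--Umegaki and decomposability arguments you outline.
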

\begin{proof} Due to \cite[Proposition 2.26]{MR1485775}, we observe that $\cl\co M$ is measurable. On the one hand, by  virtue of \cite[Proposition 6.4.17]{MR2527754} (or  \cite[Ch. 2, Proposition 3.29]{MR1485775}),  we have that 
	 $ \cl\co	S_M^p  = S_{ \cl\co M}^p$. Then, by applying Theorem \ref{mainconvex} to the measurable multifunction $\cl\co M$, we obtain  \eqref{eqmainconvex0}. On the other hand, \cite[Proposition 6.4.19]{MR2527754} implies that $\cl^{w}S_M^p  = S_{\cl\co M}^p$ provided $\mu$ is non-atomic. Finally, \eqref{eqmainconvex00}  follows in a similar way.
	 \end{proof}

\section{Examples in Optimization Theory}\label{SectionExamples} 

This section provides two examples where our results can be used to provide methodologies to solve challenging optimization problems.

The first example relies on the setting of Mathematical programming with equilibrium constraints. This class of problems has captured the attention of several researchers due to its intrinsic relation with  Nash equilibrium and bilevel problems (see, e.g, \cite{MR3753620,MR3938480,MR2886139}).

\begin{example}[Mathematical programming with equilibrium constraints] Consider  the following mathematical program with equilibrium constraints:
\begin{equation}\label{problemexample}
    \min \psi(x,y) \text{ s.t. } y\in M(x)  \textrm{  and } x\in C,
\end{equation}
where $\psi:  \mathbb{R}^s \times \mathbb{R}^m \to \R$ is the objective function, $C\subseteq \mathbb{R}^s $ is a closed set, and $M: \mathbb{R}^s \tto \mathbb{R}^m$ is a  multifunction. It has been shown that optimality conditions for this class of problems can be written in terms of generalized Fermat's rules involving (sub)-gradients of $\psi$ and  normal cones to $\gph M$ and $C$ (see, e.g., \cite[Chapter 5.2.1]{MR2191745} and references therein). Due to Corollary \ref{CorAprRandomMult}, and under its assumptions,  it is possible to approximate Problem \eqref{problemexample} changing the constraint $y\in M(x)$ by $y\in M^k(x)$. Furthermore, it follows from  assertion d) from Corollary \ref{CorAprRandomMult} that $\gph M^k$ is an $C^\infty$-smooth manifold. So the computation to the normal cone follows from the determination of such smooth representation (see, e.g., \cite[ Example 6.8]{MR1491362}).
\end{example}

The second example corresponds to the setting of two-stage stochastic programming. This kind of model is a stochastic programming problem where the intention is to chose an initial vector decision, then after the realization of a random phenomenon, the information is included, and a second choice must be given, all of this with a minimal cost. We refer to the monographs \cite{MR3242164,MR1375234} for more details about the theory.  
 \begin{example}[Two-stage stochastic programming]
 	Let $(\Omega, \mathcal{A},\mathbb{P})$ be a probability space, and consider the abstract two-stage stochastic convex  optimization problem
 	\begin{equation}\label{2stage}
 	\begin{aligned}
 		&\min \IntfLp{\psi}(x,y):= \int_{ \Omega} \psi (\omega, x,y(\omega)) \mathbb{P}(d \omega ) \\
 		&\text{ s.t } y(\omega)\in M(\omega,x ) \text{ a.s } \omega \in \Omega, 
 		x\in \mathbb{R}^s \text{ and } y\in L^p(\Omega,\R^m),
 	\end{aligned}	
 	 	\end{equation}
 	where $M : \Omega \times \mathbb{R}^s  \tto\mathbb{R}^m $ is a random multifunction and $\psi : \Omega \times \mathbb{R}^s \times \mathbb{R}^m \to \R$ is a normal integrand, which is convex with respect to the variable on $\R^m$ and satisfies \eqref{eqconjfenchel}. First, let us mention that the imposed convexity on the normal integrand $\psi$. On the one hand, it is essential to provide the strong-weak lower-semicontinuity of the integral function $\IntfLp{\psi}$  (see, e.g., \cite[Theorem 2.1]{MR917861}), which is a minimal requirement to   obtain the existence of minimizers of Problem \eqref{2stage}. On the other hand, it is important to provide the stability of such minimizers (see, e.g., \cite[Theorem 5.1]{Mordukhovich2021}. For similar purposes,  the convexity of the values of $M$ in the optimization problem \eqref{2stage} is required.
 	
 	Now, by virtue of Theorem \ref{thmfuncap}, it is possible to find a nondecreasing   sequence of $C^\infty$-normal integrands $(\psi^k)_{k}$ converging to $\psi$. Moreover, due to Corollary \ref{CorAprRandomMult}, there exists a decreasing sequence of random multifunctions $(M^k)_k$. Both sequences preserve the previously discussed important property of convexity.  Hence, we can define a sequence of optimization problems:
 		\begin{equation*} 
 	\begin{aligned}
 		&\min \IntfLp{\psi^k}(x,y):= \int_{ \Omega} \psi^k (\omega, x,y(\omega)) \mathbb{P}( d\omega ) \\
 		&\text{ s.t }  y(\omega)  \in M^k(\omega,x ) \text{ a.s } \omega \in \Omega,   			x\in \mathbb{R}^s \text{ and } y\in L^p(\Omega,\R^m).
 	\end{aligned}	
 	 	\end{equation*}
 	 	Here, it is important to emphasize that by the construction, the optimal value of the above problems is  an increasing sequence, which under some classical compactness assumptions it converges to the optimal value of Problem \eqref{2stage}. In the same spirit, by using classical techniques,  the minimizers  should converge (up to a subsequence) to a minimizer of the original problem \eqref{2stage}.
 	\end{example}

\section{Concluding remarks}

In this paper, under mild continuity assumptions, we prove that any  random multifunction can be represented as the set of minimizers of an infinitely many differentiable normal integrand. This result was used to deduce the existence of smooth approximations for multifunctions and normal integrands. Morever, we characterize the set of $p$-integrable selections of any measurable multifunctions can be represented as the set of minimizers of infinitely many differentiable integral functional. 

The results obtained in this article offer several tools for the representation and approximation of multifunctions and normal integrands by smoother objects, which is of importance for approximation in optimization, optimal control and other areas.

\bibliographystyle{plain}
\bibliography{references}
\end{document}